\newtheorem{theorem}{Theorem}[section]
\newtheorem{corollary}[theorem]{Corollary}
\theoremstyle{definition}
\theoremstyle{remark}
\numberwithin{equation}{section}
\begin{document}

\title{Noncommutative martingale concentration inequalities}
\author[Gh. Sadeghi, M.S. Moslehian]{Ghadir Sadeghi$^1$ and Mohammad Sal Moslehian$^2$}

\address{$^1$ Department of Mathematics and Computer
Sciences, Hakim Sabzevari University, P.O. Box 397, Sabzevar, Iran}
\email{ghadir54@gmail.com, g.sadeghi@hsu.ac.ir}

\address{$^2$ Department of Pure Mathematics, Center of Excellence in
Analysis on Algebraic Structures (CEAAS), Ferdowsi University of
Mashhad, P.O. Box 1159, Mashhad 91775, Iran}
\email{moslehian@um.ac.ir, moslehian@member.ams.org}

\subjclass[2010]{Primary 46L53; Secondary 46L10, 47A30.}
\keywords{(Noncommutative) probability space; trace; (noncommutative) Azuma inequality; (noncommutative) martingale concentration inequality.}

\begin{abstract}
We establish an Azuma type inequality under a Lipshitz condition for martingales in the framework of noncommutative probability spaces and apply it to deduce a noncommutative Heoffding inequality as well as a noncommutative McDiarmid type inequality. We also provide a noncommutative Azuma inequality for noncommutative supermartingales in which instead of a fixed upper bound for the variance we assume that the variance is bounded above by a linear function of variables. We then employ it to deduce a noncommutative Bernstein inequality and an inequality involving $L_p$-norm of the sum of a martingale difference.

\end{abstract}

\maketitle

\section{Introduction and preliminaries}

In probability theory, inequalities giving upper bounds on ${\rm Prob}(|X-\mathbb{E}(X)|)$, where $X$ is a random variable and $\mathbb{E}(X)$ denotes its expectation are of special interest, see \cite{ANA, TAL, 3}. Among such inequalities, the Azuma inequality, due to K. Azuma \cite{AZU}, provides a concentration result for the values of martingales having bounded differences. It states that if $(X_j)$ is a martingale and $|X_j-X_j-1|<c_j$ almost surely, then $${\rm Prob}(X_n-X_0\geq \lambda)\leq \exp\left(-\lambda^2/(2\sum_{j=1}^nc_j^2)\right)$$ for all positive integers $n$ and all $\lambda>0$. This inequality can be employed to the study of random graphs, see \cite{FRI}.
In this paper we establish an Azuma type inequality under a Lipshitz condition for martingales in the framework of noncommutative probability spaces and apply it to deduce a noncommutative Heoffding inequality as well as a noncommutative McDiarmid type bounded difference inequality; see \cite{MCD}. We also provide a noncommutative Azuma inequality for noncommutative supermartingales in which instead of a fixed upper bound for the variance we assume that the variance is bounded above by a linear function of variables. We then employ it to deduce a noncommutative Bernstein inequality, which gives an upper bound on the probability that the sum of independent random variables is more than a fixed amount, and an inequality involving $L_p$-norm of the sum of a martingale difference, see also \cite{PQ}. To achieve our goal we first fix our notation and terminology.

A von Neumann algebra $\mathfrak{M}$ on a Hilbert space with unit element $1$ equipped with a normal faithful tracial state $\tau:\mathfrak{M}\to \mathbb{C}$ is called a noncommutative probability space. We denote by $\leq$ the usual order on self-adjoint part $\mathfrak{M}^{sa}$ of $\mathfrak{M}$.
For each self-adjoint operator $x\in \mathfrak{M}$, there exists a unique spectral measure $E$ as a $\sigma$-additive mapping with respect to the strong operator topology from the Borel $\sigma$-algebra $\mathcal{B}(\mathbb{R})$ of $\mathbb{R}$ into the set of all orthogonal projections such that for every Borel function $f: \sigma(x)\to \mathbb{C}$ the operator $f(x)$ is defined by $f(x)=\int f(\lambda)dE(\lambda)$, in particular, $\chi_B(x)=\int_BdE(\lambda)=E(B)$. Of course, the modules $|x|$ of $x\in \mathfrak{M}$ can be defined by $|x|=(x^*x)^{1/2}$ by utilizing the usual functional calculus.
The inequality
\begin{eqnarray}\label{GH1}
{\rm Prob}(x\geq t):=\tau(\chi_{[t,\infty)}(x))\leq e^{-t}\tau(e^x)\,.
\end{eqnarray}
is known as exponential Chebyshev inequality in the literature.
The celebrated Golden--Thompson inequality \cite{RUS} (see also \cite{FT}) states that for any self-adjoint elements $y_1, y_2$ in a noncommutative probability space $ \mathfrak{M}$,
\begin{eqnarray}\label{T1}
\tau(e^{y_1+y_2})\leq \tau(e^{y_1/2}e^{y_2}e^{y_1/2})\,
\end{eqnarray}\label{T2}
and
\begin{eqnarray}\label{T2}
\tau(e^{y_1+y_2})\leq \tau(e^{y_1} e^{y_2}).
\end{eqnarray}

For $p\geq1$, the noncommutative $L_p$-space $L_p(\mathfrak{M})$ is defined as the completion of $\mathfrak{M}$ with respect to the $L_p$-norm $\|x\|_p:=\left(\tau(|x|^p)\right)^{1/p}$. Further, for a positive element $x\in\mathfrak{M}$, it holds that
\begin{eqnarray}\label{I}
\|x\|_p^p=\int_0^{\infty} pt^{p-1}\tau(\chi_{[t,\infty)}(x))dt.
\end{eqnarray}
The commutative cases of discussed spaces are usual $L^p$-spaces and the Schatten $p$-classes $\mathcal{C}_p$. For further information we refer the reader to \cite{P, MTS} and references therein.

Let $\mathfrak{N}$ be a von Neumann subalgebra of $\mathfrak{M}$. Then there exists a normal contraction positive mapping projecting $\mathcal{E}_{\mathfrak{N}}:\mathfrak{M}\to\mathfrak{N}$ satisfying the following properties:\\
(i) $\mathcal{E}_{\mathfrak{N}}(axb) =a\mathcal{E}_{\mathfrak{N}}(x)b$ for any $x\in\mathfrak{M}$ and $a, b\in\mathfrak{N}$;\\
(ii) $\tau\circ\mathcal{E}_{\mathfrak{N}}=\tau$.\\
Moreover, $\mathcal{E}_{\mathfrak{N}}$ is the unique mapping satisfying (i) and (ii). The mapping $\mathcal{E}_{\mathfrak{N}}$ is called the conditional expectation of $\mathfrak{M}$ with respect to $\mathfrak{N}$.

Let $\mathfrak{N}\subseteq \mathfrak{A}_j\,\,(1\leq j\leq n)$ be von Neumann subalgebras of $\mathfrak{M}$.
We say that the $\mathfrak{A}_j$ are order independent over $\mathfrak{N}$ if for every $2\leq j\leq n$, the equality $$\mathcal{E}_{j-1}(x)=\mathcal{E}_{\mathfrak{N}}(x)$$ holds for all $x\in\mathfrak{A}_j$, where $\mathcal{E}_{j-1}$ is the conditional expectation of $\mathfrak{M}$ with respect to the von Neumann subalgebra generated by $\mathfrak{A}_1,\ldots,\mathfrak{A}_{j-1}$; cf. \cite{JX}.

A filtration of $\mathfrak{M}$ is an increasing sequence $(\mathfrak{M}_j, \mathcal{E}_j)_{0\leq j\leq n}$ of von Neumann subalgebras of $\mathfrak{M}$ together with the conditional expectations $ \mathcal{E}_j$ of $\mathfrak{M}$ with respect to $\mathfrak{M}_j$ such that $\bigcup_j\mathfrak{M}_j$ is $w^*$--dense in $\mathfrak{M}$. It follows from $\mathfrak{M}_j\subseteq \mathfrak{M}_{j+1}$ that
\begin{eqnarray}\label{maral}
\mathcal{E}_i\circ\mathcal{E}_j=\mathcal{E}_j\circ\mathcal{E}_i=\mathcal{E}_{\min\{i,j\}}\,.
\end{eqnarray}
for all $i,j\geq 0$. A finite sequence $(x_j)_{0\leq j\leq n}$ in $L^1(\mathfrak{M})$ is called a martingale (supermartingale, resp.) with respect to filtration $(\mathfrak{M}_j)_{0\leq j\leq n}$ if $x_j \in \mathfrak{M}_j$ and $\mathcal{E}_j(x_{j+1})=x_j$ ($\mathcal{E}_j(x_{j+1})\leq x_j$, resp.) for every $j\geq 0$. It follows from \eqref{maral} that $\mathcal{E}_j(x_i)=x_j$ for all $i\geq j$, in particular $x_j=\mathcal{E}_j(x_n)$ for all $0\leq j\leq n$, in other words, each martingale can be adopted by an element. Put $dx_j=x_j-x_{j-1}\,\,(j\geq 0)$ with the convention that $x_{-1}=0$. Then $dx=(dx_j)_{n\geq 0}$ is called the martingale difference of $(x_j)$. The reader is referred to \cite{Xu1, Xu2} for more information.


\section{noncommutative Azuma inequality subject to a Lipschitz condition}

In this section we provide a noncommutative Azuma inequality under a Lipschitz condition.
\begin{theorem} (Noncommutative Azuma inequality)\label{main1}
Let $x=(x_j)_{0\leq j\leq n}$ be a self-adjoint martingale with respect to a filtration $(\mathfrak{M}_j, \mathcal{E}_j)_{0\leq j\leq n}$ and $dx_j=x_j-x_{j-1}$ be its associated martingale difference. Assume that $-c_j\leq dx_j\leq c_j$ for some constants $c_j>0\,\, (1\leq j\leq n)$. Then
\begin{eqnarray}\label{MOS2}
{\rm Prob}\left(\left|\sum_{j=1}^ndx_j\right|\geq \lambda\right)\leq 2 \exp\left\{\frac{-\lambda^2}{2\sum_{j=1}^nc_j^2}\right\}
\end{eqnarray}
for all $\lambda > 0$.
\end{theorem}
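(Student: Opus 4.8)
The plan is to adapt the classical Chernoff--Azuma method to the noncommutative setting, replacing independence and the ordinary exponential moment bound with the exponential Chebyshev inequality \eqref{GH1} together with the Golden--Thompson inequality \eqref{T2}. I would first treat the one-sided tail $\mathrm{Prob}(\sum_{j=1}^n dx_j \geq \lambda)$ and then obtain the two-sided bound by applying the same argument to the martingale $(-x_j)$ and adding the two estimates, which accounts for the factor $2$ in \eqref{MOS2}.

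For the one-sided bound, fix $t>0$ and set $S_n=\sum_{j=1}^n dx_j = x_n - x_0$. By \eqref{GH1} we have $\mathrm{Prob}(S_n\geq\lambda)\leq e^{-t\lambda}\,\tau(e^{tS_n})$, so the task reduces to controlling the exponential moment $\tau(e^{tS_n})$. Writing $e^{tS_n}=e^{t(S_{n-1}+dx_n)}$ and applying the Golden--Thompson inequality \eqref{T2} with $y_1=tS_{n-1}$ and $y_2=t\,dx_n$, I would peel off the last difference:
\begin{eqnarray*}
\tau(e^{tS_n})\leq \tau\left(e^{tS_{n-1}}e^{t\,dx_n}\right).
\end{eqnarray*}
Since $S_{n-1}\in\mathfrak{M}_{n-1}$, the trace property $\tau=\tau\circ\mathcal{E}_{n-1}$ together with the module property of the conditional expectation lets me rewrite the right-hand side as $\tau\!\left(e^{tS_{n-1}}\mathcal{E}_{n-1}(e^{t\,dx_n})\right)$, reducing the whole problem to estimating the conditional exponential moment $\mathcal{E}_{n-1}(e^{t\,dx_n})$.

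The key lemma is the operator Hoeffding bound: since $-c_n\leq dx_n\leq c_n$, convexity of $s\mapsto e^{ts}$ gives the operator inequality $e^{t\,dx_n}\leq \frac{c_n-dx_n}{2c_n}e^{-tc_n}+\frac{c_n+dx_n}{2c_n}e^{tc_n}$ via the spectral theorem. Applying $\mathcal{E}_{n-1}$, which is positive and unital, and using the martingale property $\mathcal{E}_{n-1}(dx_n)=0$, the linear term vanishes and I obtain $\mathcal{E}_{n-1}(e^{t\,dx_n})\leq \cosh(tc_n)\,1\leq e^{t^2c_n^2/2}\,1$. Feeding this back yields $\tau(e^{tS_n})\leq e^{t^2c_n^2/2}\tau(e^{tS_{n-1}})$, and iterating down to $S_0=0$ gives $\tau(e^{tS_n})\leq \exp\bigl(\tfrac{t^2}{2}\sum_{j=1}^n c_j^2\bigr)$. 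Hence $\mathrm{Prob}(S_n\geq\lambda)\leq \exp\bigl(-t\lambda+\tfrac{t^2}{2}\sum_j c_j^2\bigr)$, and optimizing over $t$ by choosing $t=\lambda/\sum_j c_j^2$ produces the single exponential in \eqref{MOS2}.

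I expect the main obstacle to be the careful noncommutative bookkeeping in the induction: the Golden--Thompson inequality only bounds the trace, not the operator itself, so at each step I must reduce back inside the trace, insert the conditional expectation via the module and trace-preserving properties, and invoke the scalar operator bound $\mathcal{E}_{j-1}(e^{t\,dx_j})\leq e^{t^2c_j^2/2}1$ before telescoping. One must verify that this scalar upper bound can genuinely be pulled through the trace against $e^{tS_{j-1}}\geq 0$, which is where positivity of the conditional expectation and the spectral comparison do the real work; the remaining optimization over $t$ is routine.
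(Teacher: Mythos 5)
Your proposal follows essentially the same route as the paper's proof: exponential Chebyshev, Golden--Thompson \eqref{T2} to peel off the last difference, insertion of $\mathcal{E}_{n-1}$ via the trace-preserving and module properties, the convexity (operator Hoeffding) bound $\mathcal{E}_{j-1}\left(e^{t\,dx_j}\right)\leq \cosh(tc_j)\,1\leq e^{t^2c_j^2/2}\,1$, telescoping, and optimization at $t=\lambda/\sum_{j}c_j^2$. The only notable difference is your treatment of the two-sided bound, where you run the argument for $(-x_j)$ and add the two estimates; this is actually more careful than the paper's appeal to ``symmetry,'' which asserts ${\rm Prob}(|\sum_j dx_j|\geq\lambda)=2\,{\rm Prob}(\sum_j dx_j\geq\lambda)$ as an equality even though only the sum decomposition (and hence the factor $2$ bound) is valid in general.
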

\begin{proof}
For a fixed number $t>0$, we consider the convex function $f(s)=e^{ts}$. It follows from the convexity of $f$ that
\begin{eqnarray*}
e^{ts}\leq \frac{1}{2c}(e^{tc}-e^{-tc})s+\frac{1}{2}(e^{tc}+e^{-tc})
\end{eqnarray*}
for any $-c\leq s\leq c$.\\
Since $-c_j\leq dx_j\leq c_j$, by the functional calculus, we have
\begin{eqnarray*}
e^{tdx_j}\leq \frac{1}{2c_j}(e^{tc_j}-e^{-tc_j})dx_j+\frac{1}{2}(e^{tc_j}+e^{-tc_j}).
\end{eqnarray*}
Hence
\begin{eqnarray*}
\mathcal{E}_{j-1}\left(e^{tdx_j}\right)&\leq&\mathcal{E}_{j-1}\left(\frac{1}{2c_j}(e^{tc_j}-e^{-tc_j})dx_j+\frac{1}{2}(e^{tc_j}+e^{-tc_j})\right)\\
&=&\frac{1}{2}(e^{tc_j}+e^{-tc_j})\qquad\qquad\qquad (\mbox{by~} \mathcal{E}_{j-1}(dx_j)=0, j\geq 2)\\
&=&\sum_{n=0}^\infty\frac{(tc_j)^{2n}}{(2n)!}\leq \sum_{n=0}^\infty\frac{(tc_j)^{2n}}{2^nn!}= e^{\frac{t^2c_j^2}{2}}\,.
\end{eqnarray*}
Now by inequality \eqref{GH1}, for $\lambda \geq0$, we have
\begin{eqnarray*}
{\rm Prob}\left(\sum_{j=1}^ndx_j\geq \lambda\right)&\leq& e^{-t\lambda}\tau\left(e^{t\sum_{j=1}^ndx_j}\right)\\
&\leq&e^{-t\lambda}\tau\left(e^{t\sum_{j=1}^{n-1}dx_j}e^{tdx_n}\right)\\
&=&e^{-t\lambda}\tau\left(\mathcal{E}_{n-1}\left(e^{t\sum_{j=1}^{n-1}dx_j}e^{tdx_n}\right)\right)\\
&=&e^{-t\lambda}\tau\left(e^{t\sum_{j=1}^{n-1}dx_j}\mathcal{E}_{n-1}\left(e^{tdx_n}\right)\right)\\
&\leq&e^{-t\lambda}e^{t^2c_n^2/2}\tau\left(e^{t\sum_{j=1}^{n-1}dx_j}\right)
\end{eqnarray*}
Iterating $n-2$ times, we obtain
\begin{eqnarray*}
{\rm Prob}\left(\sum_{j=1}^ndx_j\geq \lambda\right)\leq\exp\left(-t\lambda+\frac{t^2}{2}\sum_{j=1}^nc_j^2\right).
\end{eqnarray*}
It is easy to see that the the minimizing value of $\exp\left(-t\lambda+\frac{t^2}{2}\sum_{j=1}^nc_j^2\right)$ occurs at $t=\frac{\lambda}{\sum_{j=1}^nc_j^2}$. So
\begin{eqnarray}\label{A1}
{\rm Prob}\left(\sum_{j=1}^ndx_j\geq \lambda\right)\leq\exp\left(\frac{-\lambda^2}{2\sum_{j=1}^nc_j}\right).
\end{eqnarray}
Therefore symmetry and inequality (\ref{A1}) imply that
\begin{eqnarray*}
{\rm Prob}\left(\left|\sum_{j=1}^ndx_j\right|\geq \lambda\right)=2{\rm Prob}\left(\sum_{j=1}^ndx_j\geq \lambda\right)\leq2\exp\left(\frac{-\lambda^2}{2\sum_{j=1}^nc_j}\right).
\end{eqnarray*}
\end{proof}

The first consequence reads as follows.

\begin{corollary}(Noncommutative Hoeffding inequality)\label{H}
Let $\mathfrak{N}\subseteq\mathfrak{A}_j(\subseteq\mathfrak{M})$ be order independent over $\mathfrak{N}$. Let $x_j\in\mathfrak{A}_j$ be self-adjoint such that $\mathcal{E}_{\mathfrak{N}}(x_j)=0$ and $-c_j\leq x_j\leq c_j$ for some constants $c_j>0\,\,(1\leq j\leq n)$. Then
\begin{eqnarray}
{\rm Prob}\left(\left|S_n\right|\geq t\right)\leq 2 \exp\left\{\frac{-t^2}{2\sum_{j=1}^n c_j^2}\right\}.
\end{eqnarray}
for any $t>0$, where $S_n=\sum_{j=1}^n x_j$.
\end{corollary}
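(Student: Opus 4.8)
The plan is to reduce the Hoeffding statement to the Azuma inequality of Theorem \ref{main1} by exhibiting the partial sums $S_k=\sum_{j=1}^k x_j$ as a self-adjoint martingale with respect to a suitable filtration, whose martingale differences are exactly the given $x_j$. The natural filtration is $\mathfrak{M}_0=\mathfrak{N}$ and $\mathfrak{M}_k$ the von Neumann subalgebra generated by $\mathfrak{A}_1,\ldots,\mathfrak{A}_k$ for $1\le k\le n$, with associated conditional expectations $\mathcal{E}_k$. Setting $S_0=0$, we have $S_k\in\mathfrak{M}_k$ since each $x_j\in\mathfrak{A}_j\subseteq\mathfrak{M}_k$ for $j\le k$, and the increments satisfy $dS_k=S_k-S_{k-1}=x_k$, so the hypothesis $-c_k\le x_k\le c_k$ gives precisely the bounded-difference condition $-c_k\le dS_k\le c_k$ required by Theorem \ref{main1}.

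First I would verify the martingale property $\mathcal{E}_{k-1}(S_k)=S_{k-1}$, equivalently $\mathcal{E}_{k-1}(x_k)=0$ for each $k$. Here the order independence over $\mathfrak{N}$ is the crucial structural input: by definition it yields $\mathcal{E}_{k-1}(x)=\mathcal{E}_{\mathfrak{N}}(x)$ for every $x\in\mathfrak{A}_k$, where $\mathcal{E}_{k-1}$ is the conditional expectation onto the algebra generated by $\mathfrak{A}_1,\ldots,\mathfrak{A}_{k-1}$, which is exactly $\mathfrak{M}_{k-1}$. Applying this to $x=x_k\in\mathfrak{A}_k$ and invoking the hypothesis $\mathcal{E}_{\mathfrak{N}}(x_k)=0$, we obtain $\mathcal{E}_{k-1}(x_k)=\mathcal{E}_{\mathfrak{N}}(x_k)=0$, as needed. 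This identification of $\mathcal{E}_{k-1}$ with $\mathcal{E}_{\mathfrak{N}}$ on $\mathfrak{A}_k$ is the single place where the order-independence assumption enters, and it is what replaces the usual independence hypothesis of the classical Hoeffding inequality.

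With the martingale property established, the remaining steps are immediate: $(S_k)_{0\le k\le n}$ is a self-adjoint martingale with differences $x_k$ satisfying $-c_k\le x_k\le c_k$, so Theorem \ref{main1} applies verbatim and yields
\begin{eqnarray*}
{\rm Prob}\left(\left|\sum_{j=1}^n x_j\right|\geq t\right)\leq 2\exp\left\{\frac{-t^2}{2\sum_{j=1}^n c_j^2}\right\}
\end{eqnarray*}
for all $t>0$, which is the claim since $S_n=\sum_{j=1}^n x_j$.

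The main obstacle, such as it is, lies entirely in correctly matching the abstract definitions: confirming that the von Neumann algebra generated by $\mathfrak{A}_1,\ldots,\mathfrak{A}_{k-1}$ coincides with the filtration algebra $\mathfrak{M}_{k-1}$, and that the $\mathcal{E}_{k-1}$ appearing in the definition of order independence is the same conditional expectation used in Theorem \ref{main1}. Once these bookkeeping identifications are in place, no further analytic work is required; the corollary is a direct specialization of the theorem, with order independence serving as the noncommutative surrogate for the independence that drives the classical result.
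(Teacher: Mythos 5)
Your proposal is correct and is essentially the paper's own proof: both construct the filtration $\mathfrak{M}_0=\mathfrak{N}$, $\mathfrak{M}_k$ generated by $\mathfrak{A}_1,\ldots,\mathfrak{A}_k$, show the partial sums $S_k$ form a martingale with $dS_k=x_k$ via order independence and $\mathcal{E}_{\mathfrak{N}}(x_k)=0$, and then invoke Theorem \ref{main1}. Your write-up is in fact slightly more careful than the paper's, which contains a minor indexing slip in describing the generating algebras of $\mathfrak{M}_j$.
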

\begin{proof}
Let $\mathfrak{M}_0=\mathfrak{N}$ and $\mathcal{E}_0=\mathcal{E}_\mathfrak{N}$. For every $1\leq j\leq n$, let $\mathfrak{M}_j$ be the von Neumann subalgebra generated by $\mathfrak{A}_1,\ldots,\mathfrak{A}_{j-1}$ and $\mathcal{E}_j$ be the corresponding conditional expectation. Put $S_0:=0$ and $S_j:=\sum_{k=1}^jx_k$ for $1\leq j\leq n$. Then
$$\mathcal{E}_{j-1}(S_j)=\sum_{k=1}^{j-1}x_k+\mathcal{E}_{j-1}(x_k)=
\sum_{k=1}^{j-1}x_k+\mathcal{E}_{\mathfrak{N}}(x_k)=S_{j-1}$$
So $(S_j)_{0\leq j\leq n}$ is a martingale with respect to filtration $(\mathfrak{M}_j, \mathcal{E}_j)_{0\leq j\leq n}$. Since
$$dS_j=\sum_{k=1}^jx_k-\sum_{k=1}^{j-1}x_k=x_j$$
the required inequality follows from Theorem \ref{main1}.
\end{proof}

The next results present some noncommutative McDiarmid type inequalities.

\begin{corollary} (Noncommutative McDiarmid inequality)\label{mmcc}
Let $(\mathfrak{M}_j, \mathcal{E}_j)_{0\leq j\leq n}$ be a filtration of $\mathfrak{M}$, $x_j\in \mathfrak{M}_j^{sa}\,\,(1\leq j\leq n)$ and there exist mappings $g_j:\mathfrak{M}_1^{sa}\times \cdots\times \mathfrak{M}_j^{sa}\to\mathfrak{M}^{sa}$ such that the sequence $g_0(x_1,\ldots,x_n)), g_1(x_1, \cdots, x_n), \cdots, g_n(x_1, \cdots, x_n)$ constitute a martingale satisfying
\begin{eqnarray*}
-c_j\leq g_j(x_1, \cdots, x_n)-g_{j-1}(x_1, \cdots, x_n)\leq c_j
\end{eqnarray*}
for any $1\leq j\leq n$. Then
\begin{eqnarray}\label{MOS2}
{\rm Prob}\left(\left| g_n(x_1,\ldots,x_n)-g_0(x_1,\ldots,x_n))\right|\geq t\right)\leq 2 \exp\left\{\frac{-t^2}{2\sum_{j=1}^nc_j^2}\right\}.
\end{eqnarray}
\end{corollary}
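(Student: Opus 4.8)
The plan is to recognize that this corollary is an immediate specialization of Theorem~\ref{main1}, in which the abstract martingale is taken to be the sequence of $g_j$-values. First I would set $y_j := g_j(x_1,\ldots,x_n)$ for $0\le j\le n$. Since by hypothesis each $g_j$ takes values in $\mathfrak{M}^{sa}$, every $y_j$ is self-adjoint, and we are told directly that $(y_j)_{0\le j\le n}$ constitutes a martingale with respect to the filtration $(\mathfrak{M}_j,\mathcal{E}_j)$. Thus $(y_j)$ is precisely the kind of self-adjoint martingale to which Theorem~\ref{main1} applies.

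Next I would identify the associated martingale difference,
\[
dy_j = y_j - y_{j-1} = g_j(x_1,\ldots,x_n) - g_{j-1}(x_1,\ldots,x_n),
\]
and observe that the bounded-difference hypothesis $-c_j \le g_j(x_1,\ldots,x_n) - g_{j-1}(x_1,\ldots,x_n) \le c_j$ is nothing but the two-sided bound $-c_j \le dy_j \le c_j$ required in Theorem~\ref{main1}. A one-line telescoping then gives $\sum_{j=1}^n dy_j = y_n - y_0 = g_n(x_1,\ldots,x_n) - g_0(x_1,\ldots,x_n)$.

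With these identifications in place, the conclusion follows by applying Theorem~\ref{main1} to the martingale $(y_j)_{0\le j\le n}$: its concentration bound, read for this particular martingale, becomes exactly the desired estimate for $g_n(x_1,\ldots,x_n) - g_0(x_1,\ldots,x_n)$, with the same constants $c_j$.

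There is no genuine analytic obstacle here; the only step requiring care is the bookkeeping needed to confirm that each of the three hypotheses of Theorem~\ref{main1} --- self-adjointness, the martingale property, and the two-sided bound on the differences --- is supplied verbatim by the assumptions on the $g_j$, so that the theorem can be invoked without modification. In particular, one should note that the adaptedness condition $y_j \in \mathfrak{M}_j$ is already subsumed in the assertion that the $g_j$-sequence constitutes a martingale, in the sense of the definition adopted in Section~1 (where being a martingale includes $x_j \in \mathfrak{M}_j$); this is the one implicit point worth making explicit so that the passage from the $g_j$ to a bona fide filtered martingale is fully justified.
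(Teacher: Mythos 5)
Your proposal is correct and follows exactly the same route as the paper: the paper's proof is a one-line remark that the sequence $y_j=g_j(x_1,\ldots,x_n)$, $0\leq j\leq n$, forms a martingale satisfying the hypotheses of Theorem~\ref{main1}, which is precisely your argument spelled out in more detail. Your extra bookkeeping (the telescoping identity and the note that adaptedness is part of the martingale definition) is a faithful elaboration, not a different method.
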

\begin{proof}
The result can be deduced immediately from Theorem \ref{main1} due to the martingale consisting of $y_j=g_j(x_1, \cdots, x_n),\,0\leq j\leq n$ satisfies the conditions of the theorem.
\end{proof}

Considering $c_j=1$ and $g_j(X_1, \cdots, X_n)=\sum_{i=1}^j X_i$ in the previous Corollary, we reach the following Chernoff type inequality for random variables:
\begin{corollary}
Let $X_1, \cdots, X_n$ be independent random variables with $\mathbb{E}(X_j)=0$ and $|X_j|\leq 1$ for all $j$. Then
$${\rm Prob}\left(\left|\sum_{j=1}^nX_j\right|\geq t\right)\leq 2e^{-t^2/2n}\,.$$
for all $t \geq 0$.
\end{corollary}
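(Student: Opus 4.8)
The plan is to realize the classical setting as a special, commutative instance of the noncommutative framework and then to invoke Corollary~\ref{mmcc} directly. The relevant noncommutative probability space is the abelian von Neumann algebra $\mathfrak{M}=L^\infty(\Omega,\Sigma,\mathbb{P})$ acting by multiplication on $L^2(\Omega,\Sigma,\mathbb{P})$, equipped with the state $\tau(X)=\mathbb{E}(X)$; since $\mathfrak{M}$ is commutative this state is automatically normal, faithful, and tracial. Under this dictionary $\mathfrak{M}^{sa}$ is the space of real bounded measurable functions, the order $\leq$ is the pointwise (almost sure) order, and the conditional expectations $\mathcal{E}_j$ coincide with the classical conditional expectations onto $\sigma$-subalgebras, so the abstract machinery collapses to the familiar probabilistic objects.

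First I would fix the natural filtration: let $\mathfrak{M}_j$ be the von Neumann subalgebra generated by $X_1,\ldots,X_j$ (with $\mathfrak{M}_0=\mathbb{C}1$), corresponding to the $\sigma$-algebra $\sigma(X_1,\ldots,X_j)$, and let $\mathcal{E}_j$ be the associated conditional expectation. Setting $g_j(X_1,\ldots,X_n)=S_j:=\sum_{i=1}^j X_i$ with $g_0=S_0=0$, each $g_j$ has the required form $\mathfrak{M}_1^{sa}\times\cdots\times\mathfrak{M}_j^{sa}\to\mathfrak{M}^{sa}$, and the differences are $dS_j=S_j-S_{j-1}=X_j$.

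The one point that must be checked is that $(S_j)_{0\le j\le n}$ is genuinely a martingale, and this is exactly where independence enters. Because $X_j$ is independent of $X_1,\ldots,X_{j-1}$, hence of $\mathfrak{M}_{j-1}$, one has $\mathcal{E}_{j-1}(X_j)=\mathbb{E}(X_j)=0$, so that
\begin{eqnarray*}
\mathcal{E}_{j-1}(S_j)=\mathcal{E}_{j-1}(S_{j-1}+X_j)=S_{j-1}+\mathcal{E}_{j-1}(X_j)=S_{j-1},
\end{eqnarray*}
which is precisely the martingale identity. The hypothesis $|X_j|\le 1$ gives $-1\le X_j\le 1$, so the bound $-c_j\le dS_j\le c_j$ holds with $c_j=1$.

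With these choices Corollary~\ref{mmcc} applies verbatim: taking $c_j=1$ yields $\sum_{j=1}^n c_j^2=n$, and since $g_n-g_0=S_n=\sum_{j=1}^n X_j$, the conclusion reads
\begin{eqnarray*}
{\rm Prob}\left(\left|\sum_{j=1}^n X_j\right|\ge t\right)\le 2\exp\left(\frac{-t^2}{2n}\right),
\end{eqnarray*}
as claimed. I anticipate no genuine obstacle, since the content is purely a specialization; the only step worth spelling out with care is the translation of \emph{independent with mean zero} into the vanishing of $\mathcal{E}_{j-1}(X_j)$, as it is this fact — rather than the mere formation of partial sums — that makes $(S_j)$ a martingale and lets the noncommutative result be applied.
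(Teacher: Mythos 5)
Your proposal is correct and follows exactly the paper's route: the paper obtains this corollary by taking $c_j=1$ and $g_j(X_1,\ldots,X_n)=\sum_{i=1}^j X_i$ in Corollary~\ref{mmcc}, which is precisely what you do. You merely spell out the details the paper leaves implicit (the commutative realization $\mathfrak{M}=L^\infty(\Omega,\Sigma,\mathbb{P})$ with $\tau=\mathbb{E}$, and the use of independence to get $\mathcal{E}_{j-1}(X_j)=\mathbb{E}(X_j)=0$, hence the martingale property), and these details are accurate.
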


The following is another version of the noncommutative McDiarmid inequality.

\begin{corollary}\label{corfir}
Let $\mathfrak{N}\subseteq\mathfrak{M}$ and $(\mathfrak{M}_j, \mathcal{E}_j)_{0\leq j\leq n}$ be a filtration of $\mathfrak{M}$, $\mathfrak{M}_0=\mathfrak{N}$ and there be a mapping $g:\mathfrak{M}_1^{sa}\times \cdots\times \mathfrak{M}_n^{sa}\to\mathfrak{M}^{sa}$ and elements $x_j\in\mathfrak{M}_j^{sa}\,\,(1\leq j\leq n)$ such that
\begin{eqnarray*}
-c_j\leq \mathcal{E}_j(g(x_1,\ldots,x_n))-\mathcal{E}_{j-1}(g(x_1,\ldots,x_n))\leq c_j
\end{eqnarray*}
for any $1\leq j\leq n$. Then
\begin{eqnarray}\label{MOS2}
{\rm Prob}\left( \left|g(x_1,\ldots,x_n)-\mathcal{E}_{\mathfrak{N}}(g(x_1,\ldots,x_n))\right|\geq t\right)\leq 2 \exp\left\{\frac{-t^2}{2\sum_{j=1}^nc_j^2}\right\},
\end{eqnarray}
\end{corollary}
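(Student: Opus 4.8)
The plan is to reduce everything to Theorem \ref{main1} by building the natural Doob-type martingale attached to the single element $Y := g(x_1,\ldots,x_n)$. First I would set $y_j := \mathcal{E}_j(Y)$ for $0 \le j \le n$ and verify that $(y_j)_{0\le j\le n}$ is a self-adjoint martingale for the given filtration. Self-adjointness is inherited because each $\mathcal{E}_j$ is a positive (hence $*$-preserving) map and $Y \in \mathfrak{M}^{sa}$, while the martingale identity $\mathcal{E}_{j-1}(y_j) = y_{j-1}$ is precisely the tower property \eqref{maral}, since $\mathcal{E}_{j-1}\circ\mathcal{E}_j = \mathcal{E}_{j-1}$.

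Next I would identify the two endpoints of this martingale. At the bottom, $y_0 = \mathcal{E}_0(Y) = \mathcal{E}_{\mathfrak{N}}(Y)$ because $\mathfrak{M}_0 = \mathfrak{N}$. At the top, the point is that $Y$ already lies in $\mathfrak{M}_n$: each argument satisfies $x_j \in \mathfrak{M}_j^{sa} \subseteq \mathfrak{M}_n^{sa}$, and $g$ is formed from the von Neumann algebra operations so that $g(x_1,\ldots,x_n)$ belongs to the subalgebra generated by $x_1,\ldots,x_n$, which sits inside $\mathfrak{M}_n$. Consequently $\mathcal{E}_n(Y) = Y$, that is $y_n = g(x_1,\ldots,x_n)$. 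Telescoping then gives $\sum_{j=1}^n dy_j = y_n - y_0 = g(x_1,\ldots,x_n) - \mathcal{E}_{\mathfrak{N}}(g(x_1,\ldots,x_n))$, which is exactly the quantity whose concentration must be controlled.

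Finally, the hypothesis supplies the bounded-difference condition for free: by construction $dy_j = \mathcal{E}_j(g(x_1,\ldots,x_n)) - \mathcal{E}_{j-1}(g(x_1,\ldots,x_n))$, so the standing assumption $-c_j \le \mathcal{E}_j(g) - \mathcal{E}_{j-1}(g) \le c_j$ reads as $-c_j \le dy_j \le c_j$. Thus $(y_j)$ meets every hypothesis of Theorem \ref{main1}, and applying that theorem with $\lambda = t$ delivers the claimed inequality. The only genuinely delicate step is the top-endpoint identification $y_n = Y$, which requires knowing that $g(x_1,\ldots,x_n) \in \mathfrak{M}_n$ so that the outermost conditional expectation acts as the identity; for a wholly arbitrary $g$ into $\mathfrak{M}^{sa}$ this could fail, so I would either read the hypothesis as implicitly guaranteeing $g(x_1,\ldots,x_n)\in\mathfrak{M}_n$ or record this measurability-type condition explicitly. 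Everything else is a routine invocation of the tower property and of Theorem \ref{main1}.
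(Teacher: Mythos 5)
Your proof is correct and is essentially the paper's own argument: the paper likewise forms the Doob martingale $g_j(x_1,\ldots,x_n)=\mathcal{E}_j(g(x_1,\ldots,x_n))$ and feeds it into the Azuma inequality of Theorem \ref{main1} (routed through Corollary \ref{mmcc}, which is just a restatement of that theorem). The endpoint identification $\mathcal{E}_n(g(x_1,\ldots,x_n))=g(x_1,\ldots,x_n)$ that you flag as the delicate step is automatic under the paper's definition of a (finite) filtration: $\bigcup_j\mathfrak{M}_j=\mathfrak{M}_n$ is required to be $w^*$-dense in $\mathfrak{M}$ and, being a von Neumann subalgebra, is $w^*$-closed, so $\mathfrak{M}_n=\mathfrak{M}$ and $\mathcal{E}_n$ is the identity.
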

\begin{proof}
Let us put $g_n(x_1,\ldots,x_n)=g(x_1,\ldots,x_n)$. Then $g_j(x_1,\ldots,x_n)=\mathcal{E}_j(g(x_1,\ldots,x_n))$ for $0\leq j\leq n$ and we get the martingale $(g_j(x_1,\ldots,x_n))_{0\leq j\leq n}$ with respect to the filtration $(\mathfrak{M}_j, \mathcal{E}_{j})_{0\leq j\leq n}$, which satisfies the conditions Corollary \ref{mmcc}.
\end{proof}


\begin{corollary}
Let $\mathfrak{N}\subseteq\mathfrak{M}$ and $(\mathfrak{M}_j, \mathcal{E}_j)_{0\leq j\leq n}$ be a filtration of $\mathfrak{M}$, $\mathfrak{M}_0=\mathfrak{N}$ and self-adjoint elements $x_j\in\mathfrak{M}_j\,\,(0\leq j\leq n)$ constitute a martingale with respect to $(\mathfrak{M}_j, \mathcal{E}_j)_{0\leq j\leq n}$ and
\begin{eqnarray*}
\frac{-c_j}{n-j+1}\leq x_j-x_{j-1}\leq \frac{c_j}{n-j+1}
\end{eqnarray*}
for any $1\leq j\leq n$. Then
\begin{eqnarray}\label{MOS2}
{\rm Prob}\left(\left| \sum_{k=i}^nx_k-\sum_{k=i}^n\mathcal{E}_\mathfrak{N}(x_{k-1})\right|\geq t\right)\leq 2 e^{\frac{-t^2}{2\sum_{j=1}^nc_j^2}}
\end{eqnarray}
for all $0\leq i\leq j-2$.
\end{corollary}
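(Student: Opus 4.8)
The plan is to deduce this from the noncommutative Azuma inequality (Theorem~\ref{main1}) after rewriting the centered sum as the terminal value of a suitably reweighted martingale whose increments are controlled by the $c_j$.

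First I would simplify the recentering term. Since $\mathfrak{M}_0=\mathfrak{N}$ we have $\mathcal{E}_{\mathfrak{N}}=\mathcal{E}_0$, and the martingale property gives $\mathcal{E}_0(x_{k-1})=x_0$ for every $k\geq1$; consequently $\sum_{k=i}^n\mathcal{E}_{\mathfrak{N}}(x_{k-1})=(n-i+1)x_0$, so that the operator inside the probability equals $\sum_{k=i}^n(x_k-x_0)$. Expanding $x_k-x_0=\sum_{j=1}^k dx_j$ and interchanging the order of summation, I would record the identity
\[
\sum_{k=i}^n(x_k-x_0)=\sum_{j=1}^n w_j\,dx_j,\qquad w_j:=n-\max\{i,j\}+1,
\]
where $w_j$ counts the indices $k$ with $\max\{i,j\}\leq k\leq n$.

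Next I would introduce the reweighted increments $dy_j:=w_j\,dx_j$ and set $y_0:=0$ and $y_j:=\sum_{k=1}^j dy_k$. As each $dy_j$ is a real scalar multiple of the self-adjoint $dx_j$, it is self-adjoint, lies in $\mathfrak{M}_j$, and satisfies $\mathcal{E}_{j-1}(dy_j)=w_j\mathcal{E}_{j-1}(dx_j)=0$; hence $(y_j)_{0\leq j\leq n}$ is a self-adjoint martingale for the same filtration whose martingale difference is precisely $(dy_j)$. The decisive step is to confirm $-c_j\leq dy_j\leq c_j$: from $-c_j/(n-j+1)\leq dx_j\leq c_j/(n-j+1)$ and $w_j>0$ one gets $|dy_j|\leq w_j\,c_j/(n-j+1)$, so it suffices to check $w_j\leq n-j+1$. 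This holds with equality when $j>i$ (there $w_j=n-j+1$), and holds because $w_j=n-i+1\leq n-j+1$ when $j\leq i$.

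Finally, applying Theorem~\ref{main1} to the martingale $(y_j)$ and using
\[
\sum_{j=1}^n dy_j=\sum_{k=i}^n(x_k-x_0)=\sum_{k=i}^n x_k-\sum_{k=i}^n\mathcal{E}_{\mathfrak{N}}(x_{k-1})
\]
yields the asserted inequality. I expect the main obstacle to be essentially bookkeeping: performing the summation interchange that produces the weights $w_j$ and then verifying $w_j\leq n-j+1$ in the two regimes $j\leq i$ and $j>i$, which is exactly what allows the coordinatewise Lipschitz constants $c_j/(n-j+1)$ to be upgraded to $c_j$ after reweighting.
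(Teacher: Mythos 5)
Your proof is correct and is essentially the paper's argument: the paper invokes its McDiarmid-type Corollary \ref{corfir} with $g_i(y_1,\ldots,y_n)=\sum_{k=i}^n y_k$, and the Doob martingale $\mathcal{E}_j\left(g_i(x_1,\ldots,x_n)\right)$ it uses coincides, up to the additive constant $(n-i+1)x_0$, with your reweighted martingale $(y_j)$, so both routes reduce to Theorem \ref{main1} applied to a martingale whose increments are bounded by $c_j$. If anything, your bookkeeping is more precise than the paper's: the paper asserts the identity $\mathcal{E}_j\left(g_i(x_1,\ldots,x_n)\right)-\mathcal{E}_{j-1}\left(g_i(x_1,\ldots,x_n)\right)=(n-j+1)(x_j-x_{j-1})$ for every $j$, which is valid only when $j\geq i$ (for $j<i$ the correct factor is $n-i+1$), whereas your weights $w_j=n-\max\{i,j\}+1$ together with the verification $w_j\leq n-j+1$ in both regimes supply exactly what the Lipschitz bound $-c_j\leq w_j\,dx_j\leq c_j$ requires.
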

\begin{proof}
Recall that if $(x_j)_{0\leq j\leq n}$ is a martingale with respect to $(\mathfrak{M}_j, \mathcal{E}_j)_{0\leq j\leq n}$. Hence
$x_{j}=\mathcal{E}_{j}(x)$ for some $x\in \mathfrak{M}$ and all $0\leq j\leq n$. For any $0\leq i\leq j-2$, define the function $g_i$ on $\mathfrak{M}_1\times \cdots\times \mathfrak{M}_n$ by $g_i(y_1,\ldots,y_n):=\sum_{k=i}^ny_k$. Then
$$\mathcal{E}_j\left(g_i(x_1,\ldots,x_n)\right)=\mathcal{E}_j\left(\sum_{k=i}^nx_k\right)=\sum_{k=i}^n \mathcal{E}_{\min\{j,k\}}(x)\,.$$
Hence
$$\mathcal{E}_j\left(g_i(x_1,\ldots,x_n)\right)-\mathcal{E}_{j-1}\left(g_i(x_1,\ldots,x_n)\right)=(n-j+1)(x_j-x_{j-1})\,.$$
Now the requested inequality can be concluded from Corollary \ref{corfir}.
\end{proof}


\section{noncommutative Azuma inequality for supermartingales}

Sometimes Lipschitz conditions seem to be too strong. So we may need some more effective tools.
In the sequel, we prove an extension of the Azuma inequality under some mild conditions. Our first result
is indeed a noncommutative Azuma inequality involving supermartingales. Our approach is based on standard arguments in probability theory \cite{CL}.

\begin{theorem}\label{main3}
Let $x=(x_j)_{0\leq j\leq n}$ be a self-adjoint supermartingale with respect to a filtration $(\mathfrak{M}_j, \mathcal{E}_j)_{0\leq j\leq n}$ such that
for some positive constants $a_j, b_j, \sigma_j$ and $M$ satisfies
\begin{itemize}
\item [(i)] $\mathcal{E}_{j-1}((x_j-\mathcal{E}_{j-1}(x_j))^2)\leq\sigma_j^2+b_jx_{j-1}$,
\item [(ii)] $x_j-\mathcal{E}_{j-1}(x_j)\leq a_j+M$
\end{itemize}
for all $1\leq j\leq n$. Then
\begin{eqnarray}\label{MOS2}
{\rm Prob}\left( x_n -x_0 \geq \lambda\right)\leq \exp\left\{\frac{-\lambda^2}{2\left(\sum_{j=1}^n(\sigma_j^2+Db_j+a_j^2)+(M\lambda/3)\right)}\right\}\,.
\end{eqnarray}
for all $\lambda > 0$, where $D:=\max_{1\leq j\leq n-1}M_j$ and $M_j$ is the maximum of spectrum of $x_j-x_0$.
\end{theorem}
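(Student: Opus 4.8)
The plan is to transplant the classical Freedman--Bernstein argument for supermartingales into the tracial setting, using the exponential Chebyshev inequality \eqref{GH1} and the Golden--Thompson inequality \eqref{T2} in place of the commutative manipulations. The first move is to replace the supermartingale by an auxiliary martingale. Writing $d_j := x_j - \mathcal{E}_{j-1}(x_j)$, the supermartingale hypothesis $\mathcal{E}_{j-1}(x_j)\le x_{j-1}$ gives $dx_j = x_j - x_{j-1}\le d_j$, whence $x_n - x_0 = \sum_{j=1}^n dx_j \le \sum_{j=1}^n d_j =: S_n$ as self-adjoint operators. Since $X\mapsto \tau(e^{tX})$ is monotone for the operator order when $t>0$ (its derivative along $X+s(Y-X)$ equals $\tau((Y-X)e^{\,\cdot\,})\ge 0$), this yields $\tau(e^{t(x_n-x_0)})\le \tau(e^{tS_n})$. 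Moreover $(S_k)$ is a genuine martingale, since $S_k\in\mathfrak{M}_k$ and $\mathcal{E}_{k-1}(d_k)=0$, and hypotheses (i)--(ii) become statements about its differences $d_k$.

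Next I would combine \eqref{GH1} with a Golden--Thompson peeling exactly as in the proof of Theorem \ref{main1}. For $t>0$,
\[
{\rm Prob}(x_n-x_0\ge\lambda)\le e^{-t\lambda}\tau(e^{t(x_n-x_0)})\le e^{-t\lambda}\tau(e^{tS_n}),
\]
and then \eqref{T2}, together with the trace-invariance and module property of $\mathcal{E}_{n-1}$, give
\[
\tau(e^{tS_n})\le \tau\big(e^{tS_{n-1}}e^{td_n}\big)=\tau\big(e^{tS_{n-1}}\,\mathcal{E}_{n-1}(e^{td_n})\big).
\]
Thus the whole argument reduces to a per-step bound on the conditional moment generating function $\mathcal{E}_{n-1}(e^{td_n})$, which I would obtain from a scalar Bennett-type inequality applied through the functional calculus.

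This third step is where (i), (ii) and the constant $D$ do their work, and the goal is to bound $\mathcal{E}_{j-1}(e^{td_j})$ by a \emph{scalar}. Starting from the scalar estimate $e^{u}\le 1+u+\tfrac{u^{2}}{2}\cdot\tfrac{1}{1-tM/3}$ valid for $u\le tM$ and $0\le tM<3$ (a consequence of the monotonicity of $u\mapsto (e^{u}-1-u)/u^{2}$ and of $e^{u}-1-u\le \tfrac{u^{2}/2}{1-u/3}$), one completes the square in $a_j$ so as to fold the slack $a_j$ into the variance; after applying the functional calculus to $d_j$, taking $\mathcal{E}_{j-1}$, using $\mathcal{E}_{j-1}(d_j)=0$ and (i), and finally replacing the operator $x_{j-1}$ by the scalar bound $x_{j-1}\le x_0+D$ (from $x_{j-1}-x_0\le M_{j-1}\le D$), one should arrive at
\[
\mathcal{E}_{j-1}(e^{td_j})\le \exp\Big(\tfrac{t^{2}(\sigma_j^{2}+Db_j+a_j^{2})}{2(1-tM/3)}\Big)\cdot\mathbf{1}.
\]
Because the right-hand side is a scalar multiple of the identity, it pulls out of the trace and the peeling iterates cleanly down to $S_0=0$, giving $\tau(e^{tS_n})\le \exp\big(\tfrac{t^{2}V}{2(1-tM/3)}\big)$ with $V:=\sum_{j=1}^n(\sigma_j^{2}+Db_j+a_j^{2})$. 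Substituting back leaves ${\rm Prob}(x_n-x_0\ge\lambda)\le \exp\big(-t\lambda+\tfrac{t^{2}V}{2(1-tM/3)}\big)$, and the choice $t=\lambda/(V+M\lambda/3)$ (for which $1-tM/3=V/(V+M\lambda/3)>0$) produces precisely $\exp\{-\lambda^{2}/(2(V+M\lambda/3))\}$, which is \eqref{MOS2}.

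The main obstacle is exactly this per-step bound: obtaining the \emph{clean} estimate in which $a_j^{2}$ enters the variance additively while only $M$ (not $a_j+M$) appears in the Bennett denominator. A naive mean-shift $d_j\mapsto d_j-a_j$ introduces a spurious factor $e^{ta_j}$ that spoils the exponentiation, and the tempting scalar inequality $e^{td}\le 1+td+\tfrac{t^{2}(d^{2}+a^{2})}{2(1-tM/3)}$ is in fact false once $t(a_j+M)$ is large (take $M\to 0$, $d=a$: it reduces to $e^{s}\le 1+s+s^{2}$, which fails for $s=ta$ large). Making the step rigorous therefore requires either a side condition forcing $t(a_j+M)$ to stay bounded --- which must then be verified for the optimizing $t$ --- or a sharper Bennett-type inequality that genuinely separates the roles of $a_j$ and $M$; pinning down the admissible range of $t$ and the exact constants is the delicate heart of the argument. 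One should also state precisely the hypothesis under which $x_{j-1}\le D$ holds as an operator inequality (for instance $x_0\le 0$, or the use of $x_{j-1}-x_0$ in (i)), since it is this scalarization that makes the iteration possible.
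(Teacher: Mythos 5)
Your skeleton --- exponential Chebyshev \eqref{GH1}, Golden--Thompson peeling through the conditional expectations, and a scalar per-step bound on the conditional moment generating function --- is exactly the paper's strategy, and your preliminary reduction (replacing the supermartingale by the martingale of centered differences $d_j=x_j-\mathcal{E}_{j-1}(x_j)$ via monotonicity of $X\mapsto\tau(e^{tX})$) is a legitimate, even slightly cleaner, alternative to the paper's device of keeping the supermartingale itself and using $\mathcal{E}_{j-1}(x_j)\leq x_{j-1}\Rightarrow e^{t\mathcal{E}_{j-1}(x_j)-tx_{j-1}}\leq 1$ inside each step of the iteration. But the proof is genuinely incomplete: the entire content of the theorem sits in the per-step bound $\mathcal{E}_{j-1}(e^{td_j})\leq\exp\bigl(t^{2}(\sigma_j^{2}+Db_j+a_j^{2})/(2(1-tM/3))\bigr)$, and you do not prove it --- you end by asserting that it needs either a side condition keeping $t(a_j+M)$ bounded or a ``sharper Bennett-type inequality.'' Neither is needed, and your diagnosis of why the mean-shift fails is mistaken.

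The shift $d_j\mapsto d_j-a_j$ is precisely the paper's move (it splits $x_j=(\mathcal{E}_{j-1}(x_j)+a_j)+(d_j-a_j)$ before applying Golden--Thompson), and it works because one must \emph{keep} the linear term of the shifted variable rather than discard it. Since the shift is scalar, $e^{td_j}=e^{ta_j}e^{t(d_j-a_j)}$ exactly. Hypothesis (ii) gives $d_j-a_j\leq M$, so with the paper's function $h(s)=2\sum_{k\geq 2}s^{k-2}/k!$ (increasing, and $\leq 1=h(0)$ on $(-\infty,0]$), functional calculus applied to $d_j-a_j$ yields
\[
e^{t(d_j-a_j)}\leq 1+t(d_j-a_j)+\tfrac{t^{2}}{2}\,h(tM)\,(d_j-a_j)^{2},\qquad 0<t.
\]
Now apply $\mathcal{E}_{j-1}$: the linear term survives as $-ta_j$ because $\mathcal{E}_{j-1}(d_j)=0$, the quadratic term becomes $\mathcal{E}_{j-1}(d_j^{2})+a_j^{2}$ (the cross term vanishes for the same reason), and then hypothesis (i), the scalarization $x_{j-1}\leq M_{j-1}\leq D$ (after normalizing $x_0=0$, the paper's Steps (I)/(VI)), the estimate $1+y\leq e^{y}$, and $h(tM)\leq (1-tM/3)^{-1}$ for $tM<3$ give
\[
\mathcal{E}_{j-1}\bigl(e^{t(d_j-a_j)}\bigr)\leq \exp\Bigl\{-ta_j+\tfrac{t^{2}(\sigma_j^{2}+Db_j+a_j^{2})}{2(1-tM/3)}\Bigr\}.
\]
The factor $e^{ta_j}$ you call spurious is cancelled \emph{exactly} by this $e^{-ta_j}$, leaving the clean scalar bound with only $M$ in the Bennett denominator, valid for every $0<t<3/M$ and in particular at the optimizing $t=\lambda/(V+M\lambda/3)$. (The scalar inequality you exhibit as false is the one obtained by expanding around $0$ in the \emph{unshifted} variable and dropping the linear term; that is indeed hopeless, but it is not what is required.) With this per-step bound in hand, your peeling and optimization go through verbatim and recover the paper's proof.
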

\begin{proof}
\noindent \textbf{Step (I). To prove the theorem in a special case}\\
We assume that $x=(x_j)_{0\leq j\leq n}$ is a supermartingale with $x_0=0$.

\noindent \textbf{Step (II). To find an upper bound for $\tau\left(e^{tx_j}\right)$:}\\
Let $t>0$. We have
\begin{eqnarray}\label{sal}
\tau\left(e^{tx_j}\right)&=&\tau\left(e^{t\mathcal{E}_{j-1}(x_j)+
ta_j+t(x_j-\mathcal{E}_{j-1}(x_j)-a_j)}\right)\nonumber\\
&\leq&\tau\left(e^{\frac{t\mathcal{E}_{j-1}(x_j)+ta_j}{2}}
e^{t(x_j-\mathcal{E}_{j-1}(x_j)-a_j)}e^{\frac{t\mathcal{E}_{j-1}(x_j)+ta_j}{2}}\right)\qquad\qquad(\mbox{by~} \eqref{T1})\nonumber\\
&=&\tau\left(\mathcal{E}_{j-1} \left(e^{\frac{t\mathcal{E}_{j-1}(x_j)+ta_j}{2}}
e^{t(x_j-\mathcal{E}_{j-1}(x_j)-a_j)}e^{\frac{t\mathcal{E}_{j-1}(x_j)+ta_j}{2}}\right)\right)\nonumber\\
&&\qquad\qquad\qquad\qquad\qquad(\mbox{by property (ii) of conditional expectation})\nonumber\\
&=&\tau\left(e^{\frac{t\mathcal{E}_{j-1}(x_j)+ta_j}{2}}
\mathcal{E}_{j-1}\left(e^{t(x_j-\mathcal{E}_{j-1}(x_j)-a_j)}\right)e^{\frac{t\mathcal{E}_{j-1}(x_j)+ta_j}{2}}\right)\nonumber\\
&&\qquad\qquad\qquad\qquad\qquad(\mbox{by property (i) of conditional expectation})\nonumber\\
&=&\tau\left(e^{\frac{t\mathcal{E}_{j-1}(x_j)+ta_j}{2}}
\mathcal{E}_{j-1}\left(\sum_{k=0}^{\infty}\frac{t^k}{k!}(x_j-\mathcal{E}_{j-1}(x_j)-a_j)^k\right)e^{\frac{t\mathcal{E}_{j-1}(x_j)+ta_j}{2}}\right)\nonumber\\
&=&\tau\left(e^{\frac{t\mathcal{E}_{j-1}(x_j)+ta_j}{2}}
\sum_{k=0}^{\infty}\frac{t^k}{k!}\mathcal{E}_{j-1}\left((x_j-\mathcal{E}_{j-1}(x_j)-a_j)^k\right)e^{\frac{t\mathcal{E}_{j-1}(x_j)+ta_j}{2}}\right)\nonumber\\
&\leq&\tau\left(e^{\frac{t\mathcal{E}_{j-1}(x_j)+ta_j}{2}}
e^{\sum_{k=1}^{\infty}\frac{t^k}{k!}\mathcal{E}_{j-1}\left((x_j-\mathcal{E}_{j-1}(x_j)-a_j)^k\right)}e^{\frac{t\mathcal{E}_{j-1}(x_j)+ta_j}{2}}\right)\nonumber\\
&&\qquad \quad(\mbox{by the validity of~} 1+x \leq e^x \mbox{~for any self-adjoint element~} x\in\mathfrak{M})\nonumber\\
&=&\tau\left(e^{\frac{t\mathcal{E}_{j-1}(x_j)+ta_j}{2}}
e^{-ta_j+\sum_{k=2}^{\infty}\frac{t^k}{k!}\mathcal{E}_{j-1}\left((x_j-\mathcal{E}_{j-1}(x_j)-a_j)^k\right)}e^{\frac{t\mathcal{E}_{j-1}(x_j)+ta_j}{2}}\right)\nonumber\\
&&\qquad \qquad\quad(\mbox{by~}\mathcal{E}_{j-1}\left(x_j-\mathcal{E}_{j-1}(x_j)\right)=\mathcal{E}_{j-1}(x_j)-\mathcal{E}_{j-1}(x_j)=0).
\end{eqnarray}
\noindent \textbf{Step (III). To give an upper bound for $\sum_{j=2}^{\infty}\frac{t^k}{k!}\mathcal{E}_{j-1}\left((x_j-\mathcal{E}_{j-1}(x_j)-a_j)^k\right)$.}\\
Put $h(s)=2\sum_{k=2}^{\infty}\frac{s^{k-2}}{k!}$. The function $h$ satisfies (i) $h(s)\leq 1$ for $s\leq 0$ and (ii) $h$ is monotone increasing on $[0,\infty)$. Hence if $s<M$, then
\begin{eqnarray}\label{MOS4}
h(s)\leq \left\{
\begin{array}{ll}
h(M)& \mbox{when~} s\geq 0 \\
1=h(0)\leq h(M)& \mbox{when~} s< 0
\end{array}\right.
\end{eqnarray}
We have
\begin{eqnarray}\label{n1}
&&\hspace{-1cm}\sum_{j=2}^{\infty}\frac{t^k}{k!}\mathcal{E}_{j-1}\left((x_j-\mathcal{E}_{j-1}(x_j)-a_j)^k\right)\nonumber\\
&=&\mathcal{E}_{j-1}\left(\sum_{j=2}^{\infty}\frac{t^k}{k!}(x_j-\mathcal{E}_{j-1}(x_j)-a_j)^k\right)\nonumber\\
&=& \mathcal{E}_{j-1}\left(\frac{t^2}{2} (x_j-\mathcal{E}_{j-1}(x_j)-a_j)^2h\left(t\left(x_j-\mathcal{E}_{j-1}(x_j)-a_j\right)\right)\right)\nonumber\\
&\leq& \mathcal{E}_{j-1}\left(\frac{t^2}{2} (x_j-\mathcal{E}_{j-1}(x_j)-a_j)^2h\left(tM\right)\right)\nonumber\\
&& \qquad\qquad \qquad(\mbox{Using functional calculus to~} x_j-\mathcal{E}_{j-1}(x_j)-a_j (\leq M) \mbox{~and~} \eqref{MOS4})\nonumber\\
&=&\frac{h(tM)}{2}t^2\mathcal{E}_{j-1}\left(\left(x_j-\mathcal{E}_{j-1}(x_j)-a_j\right)^2\right)\nonumber\\
&=&\frac{h(tM)}{2}t^2 \left(\mathcal{E}_{j-1}((x_j-\mathcal{E}_{j-1}(x_j))^2)+2a_j\mathcal{E}_{j-1}(x_j-\mathcal{E}_{j-1}(x_j))+a_j^2\right)\nonumber\\
&=&\frac{h(tM)}{2}t^2(\mathcal{E}_{j-1}((x_j-\mathcal{E}_{j-1}(x_j))^2)+a_j^2)\nonumber\\
&\leq&\frac{h(tM)}{2}t^2(\sigma_j^2+b_jx_{j-1}+a_j^2).\qquad\qquad\qquad\qquad \qquad (\mbox{by hypothesis (i) })\nonumber\\
&\leq&\frac{h(tM)}{2}t^2(\sigma_j^2+b_jM_{j-1}+a_j^2)\qquad\qquad\qquad\qquad \qquad (\mbox{by } x_{j-1}\leq M_{j-1})
\end{eqnarray}
\noindent \textbf{Step (IV). To establish a recurrence relation.}\\
We have
\begin{eqnarray*}
\tau\left(e^{tx_j}\right)
&\leq&\tau\left(e^{\frac{t\mathcal{E}_{j-1}(x_j)+ta_j}{2}}
e^{-ta_j+\sum_{k=2}^{\infty}\frac{t^k}{k!}\mathcal{E}_{j-1}\left((x_j-\mathcal{E}_{j-1}(x_j)-a_j)^k\right)}e^{\frac{t\mathcal{E}_{j-1}(x_j)+ta_j}{2}}\right)
\quad (\mbox{by~} \eqref{sal})\nonumber\\
&\leq&\tau\left(e^{\frac{t\mathcal{E}_{j-1}(x_j)+ta_j}{2}}
e^{-ta_j+\frac{h(tM)}{2}t^2(\sigma_j^2+b_jM_{j-1}+a_j^2)}e^{\frac{t\mathcal{E}_{j-1}(x_j)+ta_j}{2}}\right)\nonumber\\
&&\qquad (\mbox{since by the functional calculus and \eqref{n1}) ~} x\leq c\Rightarrow e^x\leq e^c,\,c\in\mathbb{R})\nonumber\\
&=&\tau\left(e^{t\mathcal{E}_{j-1}(x_j)+\frac{h(tM)}{2}t^2(\sigma_j^2+b_jM_{j-1}+a_j^2)}\right)\nonumber\\
&=&\exp\left\{\frac{h(tM)}{2}t^2\left(\sigma_j^2+b_jM_{j-1}\right)+a_j^2\right\}\tau\left(e^{t\mathcal{E}_{j-1}(x_j)}\right)\nonumber\\
&=&\exp\left\{\frac{h(tM)}{2}t^2\left(\sigma_j^2+b_jM_{j-1}\right)+a_j^2\right\}\tau\left(e^{tx_{j-1}+t\mathcal{E}_{j-1}(x_j)-tx_{j-1}}\right)\nonumber\\
&\leq&\exp\left\{\frac{h(tM)}{2}t^2\left(\sigma_j^2+b_jM_{j-1}\right)+a_j^2\right\}\tau\left(e^{\frac{tx_{j-1}}{2}}
e^{t\mathcal{E}_{j-1}(x_j)-tx_{j-1}}e^{\frac{tx_{j-1}}{2}}\right)\nonumber\\
&&\hspace{5cm} (\mbox{by inequality \eqref{T1}) }\nonumber\\
&\leq&\exp\left\{\frac{h(tM)}{2}t^2\left(\sigma_j^2+b_jM_{j-1}\right)+a_j^2\right\}\tau\left(e^{tx_{j-1}}\right)\nonumber\\
&&\quad (\mbox{since the inequality }\mathcal{E}_{j-1}(x_j)\leq x_{j-1} \mbox{~yields that~} e^{t\mathcal{E}_{j-1}(x_j)-tx_{j-1}}\leq 1)\nonumber
\end{eqnarray*}
\noindent \textbf{Step (V). To find an upper bound for ${\rm Prob}(x_n \geq \lambda)$.}\\
Assume that $t<3/M$ has been chosen and $\lambda >0$. The Chebyshev inequality \eqref{GH1} yields that

\begin{eqnarray} \label{arash}
{\rm Prob}(x_n \geq \lambda)&\leq& e^{-t\lambda}\tau\left(e^{tx_n}\right)\nonumber\\
&=&e^{-t \lambda}\tau\left(\mathcal{E}_n\left(e^{tx_n}\right)\right)\nonumber\\
&\leq&e^{-t \lambda}\exp\left\{\frac{h(tM)}{2}t^2\left(\sigma_n^2+b_nM_{n-1}\right)+a_n^2)\right\}\tau\left(e^{tx_{n-1}}\right)\nonumber\\
&\leq& e^{-t\lambda}\exp\left\{\frac{h(tM)}{2}t^2)\sum_{j=1}^n(\sigma_j^2+b_jM_{j-1}+a_j^2)\right\}\tau(e^{tx_0})\qquad({\rm inductively})\nonumber\\
&=&\exp\left\{-t\lambda+\frac{h(tM)}{2}t^2\sum_{j=1}^n(\sigma_j^2+b_jM_{j-1}+a_j^2)\right\}\qquad\qquad (\mbox{by~} x_0=0)\nonumber\\
&\leq&\exp\left\{-t\lambda+\frac{t^2}{2(1-tM/3)}\sum_{j=1}^n(\sigma_j^2+b_jM_{j-1}+a_j^2)\right\}\\
&&\qquad\qquad\quad ({\rm Since~for~} \alpha<3, {\rm we~have~}
h(\alpha)\leq \sum_{k=0}^\infty\left(\frac{\alpha}{3}\right)^k=\frac{1}{1-\frac{\alpha}{3}}~~(*))\nonumber\\
&\leq&\exp\left\{-t\lambda+\frac{t^2}{2(1-tM/3)}\sum_{j=1}^n(\sigma_j^2+Db_j+a_j^2)\right\},
\end{eqnarray}
where $D:=\max_{1\leq j\leq n-1}M_j$. 
Now set $t=\frac{\lambda}{\sum_{j=1}^n(\sigma_j^2+Db_j+a_j^2)+(M\lambda/3)}\in (0,3/M)$ to get
$${\rm Prob}\left( x_n \geq \lambda\right)\leq \exp\left\{\frac{-\lambda^2}{2\left(\sum_{j=1}^n(\sigma_j^2+Db_j+a_j^2)+(M\lambda/3)\right)}\right\}\,.$$
Therefore symmetry and the last inequality imply that
$${\rm Prob}\left(| x_n| \geq \lambda\right)\leq 2\exp\left\{\frac{-\lambda^2}{2\left(\sum_{j=1}^n(\sigma_j^2+Db_j+a_j^2)+(M\lambda/3)\right)}\right\}\,.$$

\noindent \textbf{Step (VI). To prove the theorem in the general case}\\
We assume now that $x=(x_j)_{0\leq j\leq n}$ is an arbitrary supermartingale. Since $\mathcal{E}_{j-1}(x_0)=\mathcal{E}_{j-1}(\mathcal{E}_0(x_0))=\mathcal{E}_0(x_0)=x_0$, we infer that $(x_j-x_0)_{0\leq j\leq n}$ is a supermartingale, whose first term is $0$. So we conclude \eqref{MOS2}.
\end{proof}

If we take martingales and put $b_j=0$ in Theorem \ref{main3}, then we get the following Azuma inequality for martingales.
\medskip
\begin{theorem}\label{main2}
Suppose that $x=(x_j)_{0\leq j\leq n}$ is a self-adjoint martingale with respect to a filtration $(\mathfrak{M}_j, \mathcal{E}_j)_{0\leq j\leq n}$ and $dx_j=x_j-x_{j-1}$ is its associated martingale difference such that for some positive constants $a_j, \sigma_j$ and $M$ satisfies
\begin{itemize}
\item [(i)] $\mathcal{E}_{j-1}((dx_j)^2)\leq\sigma_j^2$,
\item [(ii)] $dx_j\leq a_j+M$
\end{itemize}
for all $1\leq j\leq n$. Then
\begin{eqnarray}\label{MOS2}
{\rm Prob}\left(\left|\sum_{j=1}^ndx_j\right|\geq \lambda\right)\leq 2\exp\left\{\frac{-\lambda^2}{2\left(\sum_{j=1}^n(\sigma_j^2+a_j^2\right)+M\lambda/3)}\right\}
\end{eqnarray}
for all $\lambda > 0$.
\end{theorem}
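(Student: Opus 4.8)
The plan is to obtain Theorem \ref{main2} as a direct specialization of Theorem \ref{main3}, exactly as the remark preceding the statement suggests: take the supermartingale there to be the given martingale and set $b_j=0$. First I would note that every martingale is in particular a supermartingale, since $\mathcal{E}_{j-1}(x_j)=x_{j-1}\leq x_{j-1}$, so Theorem \ref{main3} is applicable to $x=(x_j)_{0\leq j\leq n}$. The crucial observation is that for a martingale one has $\mathcal{E}_{j-1}(x_j)=x_{j-1}$, whence $x_j-\mathcal{E}_{j-1}(x_j)=x_j-x_{j-1}=dx_j$. Consequently hypothesis (i) of the present theorem, $\mathcal{E}_{j-1}((dx_j)^2)\leq\sigma_j^2$, is precisely hypothesis (i) of Theorem \ref{main3} with $b_j=0$, and hypothesis (ii), $dx_j\leq a_j+M$, is precisely hypothesis (ii) of Theorem \ref{main3}.

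With $b_j=0$ the term $Db_j$ in the bound of Theorem \ref{main3} vanishes for every $j$, so the constant $D=\max_{1\leq j\leq n-1}M_j$ plays no role whatsoever and need not be tracked. Applying Theorem \ref{main3} therefore yields the one-sided estimate
\[
{\rm Prob}\left(x_n-x_0\geq\lambda\right)\leq\exp\left\{\frac{-\lambda^2}{2\left(\sum_{j=1}^n(\sigma_j^2+a_j^2)+M\lambda/3\right)}\right\}.
\]
Since the differences telescope, $x_n-x_0=\sum_{j=1}^n dx_j$, this is exactly the one-sided form of the desired inequality for $\sum_{j=1}^n dx_j$.

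Finally I would pass to the two-sided estimate by the same symmetry device used at the end of the proof of Theorem \ref{main3}: bounding ${\rm Prob}(\sum_j dx_j\leq-\lambda)$ by the one-sided estimate applied to the negated martingale and combining the two contributions produces the factor $2$ in the claimed bound. The only point that deserves a little care is this symmetry step, because hypothesis (ii) is one-sided; one must check that the argument transfers to $-x$ (whose differences are $-dx_j$) in the same manner in which the factor $2$ was obtained in Theorem \ref{main3}. Apart from confirming this, the derivation is a mechanical substitution, so I do not anticipate any genuine obstacle beyond verifying that the hypotheses align and that $D$ disappears once $b_j=0$.
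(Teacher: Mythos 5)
Your proposal is exactly the paper's own derivation: the paper gives no separate proof of Theorem \ref{main2} at all, only the remark preceding it that one ``takes martingales and puts $b_j=0$ in Theorem \ref{main3},'' which is precisely the specialization you carry out (martingale $\Rightarrow$ supermartingale, $x_j-\mathcal{E}_{j-1}(x_j)=dx_j$, the term $Db_j$ vanishing). The symmetry caveat you rightly flag --- hypothesis (ii) bounds $dx_j$ only from above, so the negated martingale $-x$ need not satisfy it, making the factor-$2$ two-sided bound not fully justified --- is a genuine gap, but it is one the paper shares rather than resolves, since it likewise just asserts the absolute-value estimate ``by symmetry'' at the end of Step (V) of the proof of Theorem \ref{main3}.
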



The next corollary reads as follows.

\begin{corollary}\label{EC1}
Suppose that $x=(x_j)_{0\leq j\leq n}$ is a self-adjoint martingale with respect to a filtration $(\mathfrak{M}_j, \mathcal{E}_j)_{0\leq j\leq n}$ such that for some constants $\sigma_j$ and $M$ satisfies
\begin{itemize}
\item [(i)] $\mathcal{E}_{j-1}((dx_j)^2)\leq \sigma_j^2$,
\item [(ii)] $dx_j:=x_j-x_{j-1}\leq M$
\end{itemize}
for $1\leq j\leq n$. Then
\begin{eqnarray}
\tau\left(e^{\lambda (x_n-x_0)}\right)\leq \exp\left\{\frac{\lambda^2 K^2}{2(1-\lambda M/3)}\right\}
\end{eqnarray}
for all $\lambda<\frac{3}{M}$, where $K^2=\sum_{j=1}^n \sigma_j^2$.
\end{corollary}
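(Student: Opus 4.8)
The plan is to extract the exponential moment bound directly from the machinery developed for Theorem \ref{main3}, run in the simpler martingale setting where the parameters $a_j$ and $b_j$ of that theorem are both taken to be zero; the only difference is that I would halt the argument at the moment-generating bound rather than pushing on to the exponential Chebyshev inequality \eqref{GH1} and the optimization in the free parameter.

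First I would reduce to the case $x_0=0$. Since $\mathcal{E}_{j-1}(x_0)=\mathcal{E}_{j-1}(\mathcal{E}_0(x_0))=\mathcal{E}_0(x_0)=x_0$, the shifted sequence $(x_j-x_0)_{0\leq j\leq n}$ is again a martingale, its differences coincide with the $dx_j$, and hypotheses (i)--(ii) are unchanged; establishing the claim for this sequence yields the bound on $\tau(e^{\lambda(x_n-x_0)})$ in the stated generality. Next, fixing $0<\lambda<3/M$ and writing $x_j=x_{j-1}+dx_j$ with $\mathcal{E}_{j-1}(x_j)=x_{j-1}$, I would apply the Golden--Thompson inequality \eqref{T1}, the trace property $\tau\circ\mathcal{E}_{j-1}=\tau$, and the $\mathfrak{M}_{j-1}$-bimodule property of $\mathcal{E}_{j-1}$ to obtain
\begin{eqnarray*}
\tau\left(e^{\lambda x_j}\right)\leq \tau\left(e^{\lambda x_{j-1}/2}\,\mathcal{E}_{j-1}\left(e^{\lambda dx_j}\right)\,e^{\lambda x_{j-1}/2}\right).
\end{eqnarray*}
It then remains to show that $\mathcal{E}_{j-1}(e^{\lambda dx_j})\leq \exp\{\lambda^2\sigma_j^2/(2(1-\lambda M/3))\}$, a scalar that factors out of the trace and produces the one-step recurrence $\tau(e^{\lambda x_j})\leq \exp\{\lambda^2\sigma_j^2/(2(1-\lambda M/3))\}\,\tau(e^{\lambda x_{j-1}})$.

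To bound $\mathcal{E}_{j-1}(e^{\lambda dx_j})$ I would expand the exponential, use $\mathcal{E}_{j-1}(dx_j)=0$ to kill the linear term, and apply the operator inequality $1+y\leq e^y$ to pass to $\exp\{\sum_{k\geq 2}\tfrac{\lambda^k}{k!}\mathcal{E}_{j-1}((dx_j)^k)\}$. Rewriting the tail as $\tfrac{\lambda^2}{2}(dx_j)^2 h(\lambda\,dx_j)$ with $h(s)=2\sum_{k\geq 2}s^{k-2}/k!$ as in the proof of Theorem \ref{main3}, the hypothesis $dx_j\leq M$ together with the monotonicity of $h$ gives, by functional calculus, $h(\lambda\,dx_j)\leq h(\lambda M)$; then $\mathcal{E}_{j-1}((dx_j)^2)\leq \sigma_j^2$ and the elementary estimate $h(\lambda M)\leq (1-\lambda M/3)^{-1}$ (valid precisely because $\lambda M<3$) complete the inner bound. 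Iterating the recurrence from $j=n$ down to $j=1$ and using $\tau(e^{\lambda x_0})=\tau(1)=1$ then yields the assertion with $K^2=\sum_{j=1}^n\sigma_j^2$.

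The main obstacle is the noncommutative bookkeeping in the inner estimate rather than the overall structure. Specifically, I expect the delicate points to be justifying the functional-calculus monotonicity step $h(\lambda\,dx_j)\leq h(\lambda M)$ (which relies on $h$ being increasing on the relevant spectral range and on $\lambda\,dx_j\leq \lambda M$) and the compatibility of the operator inequality $1+y\leq e^y$ with the positive map $\mathcal{E}_{j-1}$. Once these two points are secured, the factoring of the scalar out of the trace and the iteration of the recurrence are entirely routine.
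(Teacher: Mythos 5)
Your proposal is correct and follows essentially the same route as the paper's own proof: your inner estimate $\mathcal{E}_{j-1}\left(e^{\lambda\, dx_j}\right)\leq \exp\left\{\tfrac{\lambda^2}{2}h(\lambda M)\sigma_j^2\right\}$ is exactly the paper's inequality \eqref{n2}, obtained as in \eqref{n1} with $a_j=b_j=0$, and your Golden--Thompson plus conditional-expectation recursion, iterated down to $\tau\left(e^{\lambda x_0}\right)=1$ and finished with $h(\alpha)\leq (1-\alpha/3)^{-1}$, is precisely the paper's iteration. The only cosmetic deviations are that you first normalize $x_0=0$ and use the symmetric form \eqref{T1} (the paper works with $\sum_j dx_j$ directly and uses \eqref{T2}), and that you absorb the bound on $h(\lambda M)$ into each step rather than once at the end.
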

\begin{proof}
We have
\begin{eqnarray}\label{n2}
\mathcal{E}_{j-1}\left(e^{t(x_j-x_{j-1})}\right)&=&1+\mathcal{E}_{j-1}\left(\sum_{k=2}^{\infty}\frac{t^k}{k!}(x_j-x_{j-1})^k\right)\qquad (\mbox{by~} \mathcal{E}_{j-1}(x_j)=x_{j-1})\nonumber\\
&\leq&\exp\left\{\frac{t^2}{2}h(tM)\sigma_j^2\right\}.\,\,(\mbox{by~}\eqref{n1} \mbox{~for~} a_j=b_j=0, 1\leq j\leq n)
\end{eqnarray}
We deduce form Golden--Thompson inequality that
\begin{eqnarray*}
\tau\left(e^{\lambda (x_n-x_0)}\right)&=&\tau\left(e^{\lambda\sum_{j=1}^n(x_j-x_{j-1})}\right)\\
&\leq&\tau\left(\mathcal{E}_{n-1}\left(e^{\lambda\sum_{j=1}^{n-1}(x_j-x_{j-1})}e^{\lambda(x_n-x_{n-1})}\right)\right)\\
&=&\tau\left(e^{\lambda\sum_{j=1}^{n-1}(x_j-x_{j-1})}\mathcal{E}_{n-1}\left(e^{\lambda(x_n-x_{n-1})}\right)\right)\\
&\leq&\exp\left\{\frac{\lambda^2}{2}h(\lambda M)\sigma_n^2\right\}\tau\left(e^{\lambda\sum_{j=1}^{n-1}(x_j-x_{j-1})}\right)\,\, (\mbox{by inequality~} \eqref{n2})\\
&\leq& \exp\left\{\frac{\lambda^2}{2}h(\lambda M)\sum_{j=1}^n\sigma_j^2\right\}\qquad\qquad\qquad\qquad\qquad (\mbox{inductively})\\
&\leq&\exp\left\{\frac{\lambda^2K^2}{2(1-\lambda M/3)}\right\} \qquad\qquad\qquad\qquad\qquad\qquad (\mbox{by~} (*))
\end{eqnarray*}
for all $\lambda<\frac{3}{M}$.
\end{proof}
In the next result we use a strategy of \cite[Corollary 0.3]{JZ1} to get an estimation of $\left\|\sum_{j-1}^ndx_j\right\|_p$.

\begin{corollary}\label{z123}
Suppose that $x=(x_j)_{0\leq j\leq n}$ is a self-adjoint martingale with respect to a filtration $(\mathfrak{M}_j, \mathcal{E}_j)_{0\leq j\leq n}$ and $dx_j=x_j-x_{j-1}$ is its associated martingale difference such that for some positive constants $\sigma_j$ and $M$ satisfies
\begin{itemize}
\item [(i)] $\mathcal{E}_{j-1}((dx_j)^2)\leq\sigma_j^2$,
\item [(ii)] $dx_j\leq M$
\end{itemize}
for all $1\leq j\leq n$. Then
\begin{eqnarray*}
{\rm Prob}\left(\left|\sum_{j=1}^ndx_j\right|\geq t\right)\leq 2\exp\left\{\frac{-3t^2}{6\sum_{j=1}^n \sigma_j^2+2tM}\right\},
\end{eqnarray*}
and \begin{eqnarray*}
\left\|\sum_{j-1}^ndx_j\right\|_p\leq \sqrt{3p}\left(\sum_{j-1}^n\left\|\mathcal{E}_{j-1}((dx_j)^2)\right\|\right)^{\frac{1}{2}}+
\sqrt{8}p\max_{1\leq j\leq n}\|dx_j\|.
\end{eqnarray*}
for $2\leq p<\infty$.
\end{corollary}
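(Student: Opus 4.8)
The two assertions are of quite different character, so I would handle them separately. The first (tail) inequality is essentially a restatement of Theorem \ref{main2}. The present hypotheses are exactly those of that theorem with the choice $a_j=0$, so that condition (ii) reads $dx_j\le M$, and its conclusion becomes, after clearing the factor of three in the exponent,
\[
{\rm Prob}\Bigl(\Bigl|\sum_{j=1}^n dx_j\Bigr|\ge t\Bigr)\le 2\exp\Bigl\{\tfrac{-t^2}{2(\sum_{j=1}^n\sigma_j^2+Mt/3)}\Bigr\}=2\exp\Bigl\{\tfrac{-3t^2}{6\sum_{j=1}^n\sigma_j^2+2tM}\Bigr\}.
\]
Thus the first displayed inequality needs no new idea; one only records that $\sigma_j^2$ may be taken to be $\|\mathcal{E}_{j-1}((dx_j)^2)\|$, the smallest constant satisfying (i), and that $M$ may be taken to be $\max_j\|dx_j\|$, since $dx_j\le\|dx_j\|\cdot 1\le M\cdot 1$.

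For the moment estimate I would feed this tail bound into the layer-cake formula \eqref{I}. Writing $S=\sum_{j=1}^n dx_j=x_n-x_0$, which is self-adjoint so that $\|S\|_p=\||S|\|_p$ with $|S|\ge 0$, formula \eqref{I} applied to the positive element $|S|$ gives
\[
\|S\|_p^p=\int_0^\infty p\,t^{p-1}\,{\rm Prob}(|S|\ge t)\,dt\le 2p\int_0^\infty t^{p-1}\exp\Bigl\{\tfrac{-3t^2}{6K^2+2tM}\Bigr\}\,dt,
\]
where $K^2=\sum_{j=1}^n\sigma_j^2$. The plan is then to split the range of integration at the point $t_0=3K^2/M$ where the two summands in the denominator coincide: on the initial interval one bounds the denominator below to produce a Gaussian factor, and on the tail one bounds it by $4tM$ to produce a purely exponential factor $e^{-3t/(4M)}$. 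Each resulting piece is a Gamma integral, namely $\int_0^\infty t^{p-1}e^{-t^2/(4K^2)}\,dt=2^{p-1}K^p\Gamma(p/2)$ and $\int_0^\infty t^{p-1}e^{-3t/(4M)}\,dt=(4M/3)^p\Gamma(p)$. Taking $p$-th roots, using subadditivity $(u+v)^{1/p}\le u^{1/p}+v^{1/p}$ (valid since $p\ge 1$), and inserting the elementary Stirling-type estimates that pass from $\Gamma(p/2)^{1/p}$ and $\Gamma(p)^{1/p}$ to multiples of $\sqrt p$ and $p$ respectively, then yields a bound of the shape $C_1\sqrt{p}\,K+C_2\,pM$, which is the asserted inequality once the constants are seen to be $\sqrt 3$ and $\sqrt 8$.

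The one genuinely delicate point is the constant in front of the variance term. The crude estimate that replaces the denominator by $12K^2$ on $[0,t_0]$ effectively doubles the variance (it degrades the sharp near-origin exponent $t^2/(2K^2)$ to $t^2/(4K^2)$) and produces a leading coefficient strictly larger than $\sqrt 3$; indeed at $p=2$ the target constant $\sqrt{3p}=\sqrt6$ is attained only if one retains the sharper exponent $t^2/(3K^2)$, which is valid on the smaller set $\{2tM\le 3K^2\}$. I would therefore use this sharp Gaussian exponent on that set, use $e^{-3t/(4M)}$ on $\{2tM\ge 6K^2\}$, and absorb the intermediate crossover region into the $M$-term, whose coefficient carries a great deal of slack (at $p=2$ it contributes only $8M/3$ against the allotted $4\sqrt2\,M$). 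This bookkeeping of constants, rather than any conceptual difficulty, is where essentially all the work sits: the conceptual content is entirely contained in Theorem \ref{main2} together with the distribution identity \eqref{I}.
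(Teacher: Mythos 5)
Your proposal is correct and follows essentially the same route as the paper: the tail bound is read off from Theorem \ref{main2} with $a_j=0$ (taking $\sigma_j^2=\|\mathcal{E}_{j-1}((dx_j)^2)\|$ and $M=\max_j\|dx_j\|$, which you rightly make explicit), and the moment bound comes from the layer-cake formula \eqref{I}, a split of the integral that keeps the sharp Gaussian exponent $e^{-t^2/(3K^2)}$ on $\{2tM\le 3K^2\}$, and the estimate $\Gamma(\alpha)\le\alpha^{\alpha-1}$. The only difference is that the paper avoids your three-region bookkeeping entirely: splitting once at $t_0=3K^2/(2M)$, for $t\ge t_0$ one has $6K^2+2tM\le 4tM+2tM=6tM$, hence the integrand is at most $t^{p-1}e^{-t/(2M)}$, and the resulting contribution $2p(2M)^p\Gamma(p)\le 2^{p+1}M^pp^p$ gives $2^{1+1/p}pM\le\sqrt{8}\,pM$ for $p\ge 2$ — so the weaker exponential rate $e^{-t/(2M)}$ (rather than your $e^{-3t/(4M)}$) already fits inside the allotted $M$-term and no crossover region arises; also note that on the initial interval one bounds the denominator \emph{above} (by $9K^2$), not below.
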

\begin{proof}
The first inequality is an immediate consequence of Theorem \ref{main2}. Now we prove the inequality involving the Schatten norm.\\
Let $K^2=\sum_{j=1}^n\|\mathcal{E}_{j-1}((dx_j)^2)\|$. Note that $\max_{1\leq j\leq n}\|dx_j\|\leq M$. It follows form (\ref{I}) that
\begin{eqnarray*}
\left\|\sum_{j=1}^ndx_j\right\|_p^p&\leq&2p\int_0^{\infty}t^{p-1}\exp\left(\frac{-3t^2}{6K^2+2tM}\right)dt\\
&=&2p\left(\int_0^{\frac{3K^2}{2M}}t^{p-1}\exp\left(\frac{-3t^2}{6K^2+2tM}\right)dt+\int_{\frac{3K^2}{2M}}^{\infty}t^{p-1}
\exp\left(\frac{-3t^2}{6K^2+2tM}\right)dt\right)\\
&\leq&2p\left(\int_0^{\frac{3K^2}{2M}}t^{p-1}\exp\left(\frac{-t^2}{3K^2}\right)dt+\int_{\frac{3K^2}{2M}}^{\infty}t^{p-1}
\exp\left(\frac{-t}{2M}\right)dt\right).
\end{eqnarray*}
By the change of variable $t^2=3K^2r$ and employing $\Gamma(\alpha):=\int_0^\infty e^{-r}r^{\alpha-1}\leq \alpha^{\alpha-1}\,\,(\alpha\geq1)$ we get \begin{eqnarray*}
\int_0^{\frac{3K^2}{2M}}t^{p-1}\exp\left(\frac{-t^2}{3K^2}\right)dt=\frac{1}{2}(3K^2)^{\frac{p}{2}}\int_0^{\frac{3K^2}{4M^2}}e^{-r}r^{\frac{p}{2}-1}dr
\leq \frac{1}{2}3^{\frac{p}{2}}K^p\Gamma\left(\frac{p}{2}\right)
\leq\frac{1}{2}3^{\frac{p}{2}}K^p\left(\frac{p}{2}\right)^{\frac{p}{2}-1}
\end{eqnarray*}
The change of variable $t=2Mr$ yields that
\begin{eqnarray*}
\int_{\frac{3K^2}{2M}}^{\infty}t^{p-1}\exp\left(\frac{-t}{2M}\right)dt=2^pM^p\int_{\frac{3K^2}{4M^2}}^{\infty}r^{p-1}e^{-r}dr
\leq 2^pM^p\Gamma(p)\leq2^pM^pp^{p-1}.
\end{eqnarray*}
Thus we obtain
\begin{eqnarray*}
\left\|\sum_{j=1}^ndx_j\right\|_p^p\leq 2p\left(\frac{1}{2}3^{\frac{p}{2}}K^p\left(\frac{p}{2}\right)^{\frac{p}{2}-1}+2^pM^pp^{p-1}\right).
\end{eqnarray*}
It follows from Minkowski inequality that
\begin{eqnarray*}
\left\|\sum_{j=1}^ndx_j\right\|_p\leq 2^{-1/2+1/p}K\sqrt{3p}+2^{1+1/p}Mp\leq \sqrt{3p}K+2^{3/2}pM\,.
\end{eqnarray*}

\end{proof}

As a consequence we get a noncommutative Bernstein inequality; see \cite[Corollary 2.2.]{GHMO} and \cite[Corllary 0.2]{JZ1}.

\begin{theorem} (Noncommutative Bernstein inequality)\label{Ber}
Let $\mathfrak{N}\subseteq\mathfrak{A}_j(\subseteq\mathfrak{M})$ be order independent over $\mathfrak{N}$. Let $x_j\in\mathfrak{A}_j$ be self-adjoint such that
\begin{itemize}
\item [(i)] $\mathcal{E}_{\mathfrak{N}}(x_j)=0$,
\item [(ii)] $\mathcal{E}_{\mathfrak{N}}(x_j^2)\leq b_j^2$,
\item [(iii)] $\|x_j\|\leq M$,
\end{itemize}
for some $M>0$ and all $1\leq j\leq n$. Then for each $\lambda\geq0$,
\begin{eqnarray*}
{\rm Prob}\left(\sum_{j=1}^nx_j\geq \lambda\right)\leq \exp\left(-\frac{\lambda^2}{2b^2+(2/3)\lambda M}\right),
\end{eqnarray*}
where $b^2=\sum_{j=1}^nb_j^2$.
\end{theorem}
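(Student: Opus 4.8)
The plan is to reduce the statement to the exponential moment bound of Corollary \ref{EC1} by manufacturing a martingale out of the $x_j$, exactly as in the proof of the noncommutative Hoeffding inequality (Corollary \ref{H}). First I would take $\mathfrak{M}_0=\mathfrak{N}$, $\mathcal{E}_0=\mathcal{E}_{\mathfrak{N}}$, let $\mathfrak{M}_j$ be the von Neumann subalgebra generated by $\mathfrak{A}_1,\ldots,\mathfrak{A}_j$ with associated conditional expectation $\mathcal{E}_j$, and set $S_0=0$ and $S_j=\sum_{k=1}^j x_k$. Order independence over $\mathfrak{N}$ gives $\mathcal{E}_{j-1}(x_j)=\mathcal{E}_{\mathfrak{N}}(x_j)=0$, whence $\mathcal{E}_{j-1}(S_j)=S_{j-1}$, so $(S_j)_{0\leq j\leq n}$ is a self-adjoint martingale with difference $dS_j=x_j$ and $S_n-S_0=\sum_{j=1}^n x_j$.

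Next I would verify that this martingale meets the two hypotheses of Corollary \ref{EC1} with $\sigma_j^2=b_j^2$. Since $x_j\in\mathfrak{A}_j$ we also have $x_j^2\in\mathfrak{A}_j$, so order independence yields $\mathcal{E}_{j-1}((dS_j)^2)=\mathcal{E}_{j-1}(x_j^2)=\mathcal{E}_{\mathfrak{N}}(x_j^2)\leq b_j^2$, which is condition (i). For condition (ii), self-adjointness together with $\|x_j\|\leq M$ forces the spectrum of $x_j$ into $[-M,M]$, so that $dS_j=x_j\leq M\cdot 1$. Hence Corollary \ref{EC1} applies with $K^2=\sum_{j=1}^n\sigma_j^2=\sum_{j=1}^n b_j^2=b^2$, giving $\tau\big(e^{tS_n}\big)\leq\exp\{t^2b^2/(2(1-tM/3))\}$ for every $t\in(0,3/M)$.

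Finally, I would combine this with the exponential Chebyshev inequality \eqref{GH1}: for any such $t$,
\[
{\rm Prob}\Big(\sum_{j=1}^n x_j\geq\lambda\Big)\leq e^{-t\lambda}\tau\big(e^{tS_n}\big)\leq\exp\Big\{-t\lambda+\frac{t^2b^2}{2(1-tM/3)}\Big\}.
\]
It then remains to optimize the exponent over $t$. Rather than differentiate, I would simply insert the near-optimal choice $t=\lambda/(b^2+\lambda M/3)$; a direct computation shows $1-tM/3=b^2/(b^2+\lambda M/3)$, so the quadratic term collapses to $t\lambda/2$ and the exponent becomes $-t\lambda/2=-\lambda^2/(2b^2+(2/3)\lambda M)$, which is the claimed bound.

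This is essentially a routine reduction, so I do not expect a deep obstacle; the only points needing care are the bookkeeping by which order independence transfers condition (ii) on $x_j^2$ to the conditional variance $\mathcal{E}_{j-1}((dS_j)^2)$, and the check that the chosen $t$ actually lies in the admissible interval $(0,3/M)$ — which holds because $\lambda/(b^2+\lambda M/3)<3/M$ is equivalent to $0<3b^2$. I would also stress that, unlike the two-sided Hoeffding-type corollaries, Bernstein's inequality is one-sided, so the exponential-moment route through Corollary \ref{EC1} is preferable to the symmetrized Theorem \ref{main2}, as it avoids the spurious factor of $2$.
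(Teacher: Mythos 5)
Your proof is correct, and although it builds exactly the same martingale $(S_j)$ as the paper (the reduction copied from the Hoeffding corollary, with $\mathcal{E}_{j-1}(x_j)=\mathcal{E}_{\mathfrak{N}}(x_j)=0$ and $\mathcal{E}_{j-1}(x_j^2)=\mathcal{E}_{\mathfrak{N}}(x_j^2)\leq b_j^2$ by order independence), the final step takes a genuinely different route. The paper deduces the theorem by citing Corollary \ref{z123}, whose first inequality is the symmetrized, two-sided bound ${\rm Prob}\left(\left|\sum_{j=1}^n dx_j\right|\geq t\right)\leq 2\exp\left\{-3t^2/(6\sum_{j=1}^n\sigma_j^2+2tM)\right\}$ coming from Theorem \ref{main2}; taken literally, this yields the stated one-sided Bernstein bound only with an extra factor of $2$, so the paper's citation is slightly loose — to get the constant-free one-sided form one must unwind to the pre-symmetrization estimate in Step (V) of the proof of Theorem \ref{main3}. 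Your route instead goes through the exponential moment bound of Corollary \ref{EC1} (with $K^2=b^2$), then the exponential Chebyshev inequality \eqref{GH1}, and then the explicit choice $t=\lambda/(b^2+\lambda M/3)$, which you correctly check lies in $(0,3/M)$ and collapses the exponent to $-t\lambda/2=-\lambda^2/(2b^2+(2/3)\lambda M)$. In effect you re-derive, for the specific martingale $S_n$, what Step (V) of Theorem \ref{main3} does in general. What your approach buys is a self-contained deduction that literally produces the stated constant (no spurious factor of $2$), at the modest cost of redoing the Chebyshev-plus-optimization step that the paper prefers to quote as a packaged corollary; your closing remark about why the moment-bound route is the right one for a one-sided inequality is exactly the point the paper's own proof glosses over.
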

\begin{proof}
Put $S_0:=0$ and $S_j:=\sum_{k=1}^jx_k\,\,(1\leq j\leq n)$. As one can see from the proof of the noncommutative Hoefdding inequality \ref{H} that $(S_j)_{0\leq j\leq n}$ is a martingale. Since $\mathfrak{N}\subseteq\mathfrak{A}_j\subseteq\mathfrak{M}$ is order independent over $\mathfrak{N}$ we have
$\mathcal{E}_{j-1}((dS_j)^2)=\mathcal{E}_{\mathfrak{N}}(x_j^2)\leq b_j^2$. In addition, $dS_j\leq \|dS_j\|\leq M$.
Now the required inequality is deduced from Corollary \ref{z123} with the $S_n$ instead of $x_n$.
\end{proof}

\begin{corollary}
Suppose that $x=(x_j)_{0\leq j\leq n}$ is a self-adjoint martingale with respect to a filtration $(\mathfrak{M}_j, \mathcal{E}_j)_{0\leq j\leq n}$ such that for some constants $ \sigma_j$ and $M_j$ satisfies
\begin{itemize}
\item [(i)] $\mathcal{E}_{j-1}((dx_j)^2)\leq\sigma_j^2$,
\item [(ii)] $dx_j\leq M_j$
\end{itemize}
for $1\leq j\leq n$. Then
\begin{eqnarray}\label{MOS2}
{\rm Prob}\left(\left|\sum_{j=1}^ndx_j\right|\geq \lambda\right)\leq 2\exp\left\{\frac{-\lambda^2}{2\sum_{j=1}^n\sigma_j^2+\sum_{M_j>M}(M_j-M)^2+
M\lambda/3}\right\}
\end{eqnarray}
for any $M$.
\end{corollary}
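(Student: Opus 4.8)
The plan is to derive the estimate directly from the martingale Azuma inequality of Theorem \ref{main2} by a truncation of the one-sided increment bounds. Theorem \ref{main2} imposes its increment hypothesis in the form $dx_j \leq a_j + M$, in which a \emph{single} threshold $M$ controls the exponential tail (through the factor governed by $1 - tM/3$) while each nonnegative constant $a_j$ provides the slack needed to absorb any excess of the $j$-th increment over that common threshold. The varying bounds $M_j$ of the present corollary are exactly of this shape once we split off a common level $M$.

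Concretely, for each $1 \leq j \leq n$ I would put
\[
a_j := \max\{M_j - M,\ 0\},
\]
so that $a_j \geq 0$ and $a_j + M = \max\{M_j, M\} \geq M_j$. Hypothesis (ii) of the corollary, $dx_j \leq M_j$, then gives $dx_j \leq a_j + M$, which is precisely hypothesis (ii) of Theorem \ref{main2}. Hypothesis (i), namely $\mathcal{E}_{j-1}((dx_j)^2) \leq \sigma_j^2$, coincides verbatim with hypothesis (i) of that theorem, so nothing needs to be checked there.

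With these choices Theorem \ref{main2} applies and yields a Bernstein-type tail whose denominator is built from $\sum_{j=1}^n(\sigma_j^2 + a_j^2)$ together with the $M\lambda/3$ correction. It then remains only to evaluate $\sum_{j=1}^n a_j^2$: since $a_j = M_j - M$ exactly when $M_j > M$ and $a_j = 0$ otherwise,
\[
\sum_{j=1}^n a_j^2 = \sum_{j:\, M_j > M}(M_j - M)^2,
\]
which is the correction term appearing in the claimed inequality. Substituting this back reproduces the stated bound, and since $M > 0$ was arbitrary the estimate holds for every such $M$.

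I do not expect a genuine obstacle here: the single idea is the truncation $a_j = (M_j - M)_+$, which recasts the family of distinct one-sided bounds $M_j$ into the uniform-threshold form $dx_j \leq a_j + M$ demanded by Theorem \ref{main2}. The only points deserving a line of care are verifying the inequality $a_j + M \geq M_j$ in both cases $M_j \leq M$ and $M_j > M$, and observing that the threshold $M$ is free, so that optimizing over $M$ (balancing the variance sum $\sum_j \sigma_j^2$ against the truncation sum $\sum_{M_j > M}(M_j - M)^2$) is what turns this family of estimates into a useful one.
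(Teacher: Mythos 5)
Your proposal is correct and follows exactly the paper's own argument: the paper likewise deduces the corollary from Theorem \ref{main2} by choosing $a_j=0$ when $M_j\leq M$ and $a_j=M_j-M$ when $M_j\geq M$, i.e.\ precisely your truncation $a_j=(M_j-M)_+$. You have merely spelled out the verification $dx_j\leq M_j\leq a_j+M$ and the identity $\sum_j a_j^2=\sum_{M_j>M}(M_j-M)^2$, which the paper leaves implicit.
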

\begin{proof}
It follows from Theorem \ref{main2} by choosing
\begin{eqnarray*}
a_j=\begin{cases}
\begin{array}{ll}
0&{\rm if}~~ M_j\leq M \\
M_j-M & {\rm if}~~ M_j\geq M.
\end{array}
\end{cases}
\end{eqnarray*}
\end{proof}


\end{document}